\newcommand{\C}{{\mathbb C}}
\newcommand{\Z}{{\mathbb Z}}
\newcommand{\E}{{\mathbb E}}
\newtheorem{theorem}{Theorem}[section]
\newtheorem{lemma}[theorem]{Lemma}
\newtheorem{corollary}[theorem]{Corollary}
\newtheorem{proposition}[theorem]{Proposition}
\newtheorem{definition}[theorem]{Definition}
\def\cal{\mathcal}
\newcommand{\call}[0]{{\cal L}}
\newcommand{\cale}[0]{{\cal E}}
\newcommand{\calk}[0]{{\cal K}}
\newcommand{\calm}[0]{{\cal M}}
\begin{document}

\title[Inverse semigroup equivariant $KK$-theory]{Inverse semigroup equivariant $KK$-theory and $C^*$-extensions} 
\author[Burgstaller]{Bernhard Burgstaller}
\email{bernhardburgstaller@yahoo.de}
\subjclass{19K35, 20M18, 19K33}
\keywords{$KK$-theory, Ext, extension group, inverse semigroup}

\begin{abstract}
In this note we extend the classical result by G. G. Kasparov that the Kasparov groups $KK_1(A,B)$
can be identified with the extension groups $\mbox{Ext}(A,B)$ to the inverse semigroup equivariant setting.
More precisely, we show that $KK_G^1(A,B) \cong
\mbox{Ext}_G(A \otimes \calk_G,B \otimes \calk_G)$
for every countable, $E$-continuous inverse semigroup $G$. 
For locally compact second countable groups $G$
this was proved by K. Thomsen, and technically 
this note
presents an adaption of his proof.
\end{abstract}

\maketitle

%
%


\section{}

In Theorem 7.1 of \cite{kasparov1981}, G. G. Kasparov shows that for every 
{\em compact} second countable group $G$
and ungraded separable $G$-algebras $A$ and $B$ (where $A$ is nuclear) there exists an isomorphism
between the extension group $\mbox{Ext}(A,B)$ and the Kasparov group $KK^1(A,B):=KK(A,B \oplus B)$.
In \cite{thomsen}, K. Thomsen generalizes this result to locally compact second countable Hausdorff groups $G$
and ungraded separable $G$-algebras $A$ and $B$
by establishing 
an isomorphism between $\mbox{Ext}_G(A \otimes \calk_G,B \otimes \calk_G)$
and $KK^1_G(A,B)$, where
$\calk_G = \calk \otimes \calk(L^2(G))$. 
Very roughly speaking the proof is done by considering Kasparov's proof for these more general groups $G$.
Some unitaries which fall out due to $G$-equivariance 
during the process of equivalently transforming Kasparov cycles to another format
called $KK$-cocycles
are not
averaged away, but kept as unitary cocycles and then transferred into the algebra $\calk_G$
by some equivariance theorems by J. A. Mingo and W. J. Phillips \cite{mingophillips}.

In this note we prove the analogous result for countable, $E$-continuous inverse semigroups $G$
by adapting K. Thomsen's proof. 

We will here not repeat Thomsen's complete proof but only show the modifications that have to be made
when reading Thomsen's paper for an inverse semigroup $G$.
We will be mainly focused on details, leaving the greater context to the original paper by Thomsen.
The only thing we have to observe and to take care of is the $G$-action.
The definitions of equivariant $KK$-theory for groups and inverse semigroups are very close,
see Section \ref{section2},
but the difference that $g g^{-1}=1$ in a group but $g g^{-1}$ is just a projection
in an inverse semigroup, is unavoidable.
The short answer of this note is that we shall make only the following modifications
in Thomsen's paper \cite{thomsen}:

\begin{itemize}
\item
If one side of an identity 
contains an action $\alpha_{g}$ but the other side does not, then we have
to add $\alpha_{g g^{-1}}$ to the other side. 
For example, $\alpha_g(x)=y$ 
has to be changed to
$\alpha_g(x)=\alpha_{g g^{-1}}(y)$. 
This is obvious since $\alpha_g$ looses information by
projecting onto $\alpha_{g g^{-1}}$.

\item
A unitary cocycle 
has to be replaced by a cocycle in the sense of Definition \ref{defunitarycocycle}. 
It is obvious that we cannot work with unitaries.
The unitary operator definition changes to a partial isometry definition with identical source and range projection.

\item
The 
$\ell^2(G)$-space 
has to be completely replaced by a {compatible} $\ell^2(G)$-space,
see Definition \ref{defCompatibleL2}.
Without a suitable change the algebra $\calk(\ell^2(G))$ were no longer a $G$-algebra
and the proof would break down. The construction of this compatible $\ell^2(G)$-space requires
however the inverse semigroup $G$ to be {\em $E$-continuous}, see Definition \ref{definitionEcontinuous}.
\end{itemize}

More or less we could finish here our note, but we shall nevertheless discuss some
of the modifications in detail to convince the reader that everything goes through.

An important help, and a fact which is used again and again, is that the operator $\alpha_{g g^{-1}} \in Z \calm(A)$
is always in the center of the multiplier algebra of a given $G$-algebra $(A,\alpha)$. This includes also 
the $G$-algebra of adjoint-able operators $\call(\cale)$ of a given $G$-Hilbert module $\cale$.
In Thomsen's proof $\alpha_{g g^{-1}}=1$ and so $\alpha_{g g^{-1}}$ is absorbed by any neighboring element via multiplication. In our proof $\alpha_{g g^{-1}}$ often cannot vanish in its
position, but since it is in the center, can move around within an expression until it is absorbed elsewhere, 
for example another presence of $\alpha_g$ through the identity $\alpha_{g g^{-1}} \alpha_g = \alpha_g$.
With that technique, Thomsen's proof can be kept under 
the weaker assumptions.


A different approach in generalizing Kasparov's result to
locally compact groups is given by J. Cuntz \cite{cuntz1998}.

The structure of this note is as follows.
The Sections \ref{section2} to \ref{section7}
represent corresponding sections of Thomsen's paper,
with the same numbers and titles.
There we discuss in each section the modifications
that have to be performed
in the corresponding section in \cite{thomsen}.
Section \ref{section2} includes also a brief summary of inverse semigroup equivariant $KK$-theory,
and Section \ref{section7} contains the above mentioned main result in
Theorems \ref{theorem71} and \ref{theorem72}.
The last Section \ref{sectionMingoPhillips} presents an appendix where
we introduce the compatible $\ell^2(G)$-space and adapt certain triviality results
from Mingo and Phillips \cite{mingophillips} for this 
$\ell^2(G)$-space.



\section{The Busby-invariant in the equivariant case}

\label{section2}

Let $G$ denote a countable inverse semigroup. We shall denote
the involution
on $G$ by $g \mapsto g^{-1}$ (determined by $g g^{-1} g = g$).
A semigroup homomorphism is said to be {\em unital} if it preserves the identity $1 \in G$
and the zero element $0 \in G$, provided that the involved inverse semigroups are gifted with any of these elements.
In this note, we shall however require none of them. 
We consider $G$-equivariant $KK$-theory as defined in \cite{burgiSemimultiKK}
but make a slight adaption by making this theory
{\em compatible} in the sense that we only allow for {\em compatible} Hilbert (bi)modules (cf. Definition \ref{defHilbert}).
We are going to recall the basic definitions of $KK^G$.

\begin{definition}   \label{defCstar}
{\rm
A {\em $G$-algebra} $(A,\alpha)$ is a $\Z/2$-graded $C^*$-algebra $A$ with a
unital semigroup homomorphism
$\alpha: G \rightarrow \mbox{End}(A)$ such that
$\alpha_g$ respects the grading
and $\alpha_{g g^{-1}}(x) y = x \alpha_{g g^{-1}}(y)$
for all $x,y \in A$ and $g \in G$.
}
\end{definition}

\begin{definition}   \label{defHilbert}
{\rm
A {\em $G$-Hilbert $B$-module} $\cale$ is a $\Z/2$-graded Hilbert
module over a $G$-algebra $(B,\beta)$ endowed with a unital
semigroup homomorphism $G \rightarrow \mbox{Lin}(\cale)$ (linear maps on $\cale$)
such that $U_g$ respects the grading and
$\langle U_g(\xi),U_g(\eta)\rangle = \beta_g(\langle \xi,\eta \rangle)$,
$U_g(\xi b) = U_g(\xi) \beta_g(b)$,
and
$U_{g g^{-1}}(\xi) b = \xi \beta_{g g^{-1}}( b)$
for all $g \in G,\xi,\eta \in \cale$ and $b \in B$.
}
\end{definition}

In the last definition, $U_{g g^{-1}}$ is automatically a self-adjoint projection in the {\em center} of
$\call(\cale)$ (because adjoint-able operators are $B$-linear and $U_{g g^{-1}}(\xi) b = \xi \beta_{g g^{-1}}( b)$),
and the $G$-action $G \rightarrow \mbox{End}(\call(\cale))$ given by $g(T) =
U_g T U_{g^{-1}}$ turns $\call(\cale)$ to a $G$-algebra ($g \in G$ and $T \in \call(\cale)$).
A $G$-algebra $(A,\alpha)$
is a $G$-Hilbert module over itself under the inner product $\langle a,b\rangle
= a^* b$ and
$U: = \beta := \alpha$ in the last definition.
A $*$-homomorphism between $G$-algebras is called {\em $G$-equivariant} if it intertwines
the $G$-action.
A {\em $G$-Hilbert $A,B$-bimodule} over $G$-algebras $A$ and $B$
is a $G$-Hilbert $B$-module $\cale$ equipped with a $G$-equivariant $*$-homomorphism
$A \rightarrow \call(\cale)$.
The compact operators on a separable Hilbert space are written as $\calk$, $\calk(\cale) \subseteq \call(\cale)$ denotes the compact operators
on a Hilbert module $\cale$, and $\theta_{x,y} \in \calk(\cale)$ the elementary compact operators
$\theta_{x,y}(z):= x \langle y,z\rangle$ for all $x,y,z \in \cale$.

\begin{definition}  \label{defCycle}
{\rm
Let $A$ and $B$ be $G$-algebras.
We define a Kasparov cycle $(\cale,T)$, where $\cale$ is a $G$-Hilbert $A,B$-bimodule, to be an ordinary Kasparov cycle (without $G$-action) (see \cite{kasparov1981,kasparov1988}) satisfying $U_g T U_{g^{-1}} - T U_{g g^{-1}} \in \{S \in \call(\cale)|\, a S, S a \in \calk(\cale) \mbox{ for all } a \in A\}$ for all
$g \in G$. The 
Kasparov group $KK^G(A,B)$ is defined to be the collection $\E^G(A,B)$ of these cycles
divided by homotopy induced by $\E^G(A,B[0,1])$.
}
\end{definition}

There exists an associative Kasparov product in $KK^G$ as usual (see \cite{burgiSemimultiKK}).

We list here some notions from \cite{thomsen}.
One is given two ungraded separable $G$-algebras $(A,\alpha)$ and $(B,\beta)$.
The $G$-algebra $B$ is assumed to be weakly stable, that is,
there exists a $G$-equivariant isomorphism $(B ,\beta) \cong (B \otimes \calk, \beta \otimes id_\calk)$.
The {\em multiplier} and {\em corona} algebra of $A$ is denoted by $\calm(A)$ and $Q(A):= \calm(A)/A$, respectively.

We remark that
the identity $\calm(A) \cong \call_A(A)$ for a $C^*$-algebra $A$ is often and freely used.
(We recall that the isomorphism is given by mapping an operator $T \in \call_A(A)$ to the double centralizer $(T,T') \in \calm(A)$,
where $T'(a):= (T^*(a^*))^*$ for all $a \in A$.)
In particular, 
$\calm(B \otimes \calk)$ and $\call_{B \otimes \calk}(B \otimes \calk)$ will be often identified.
\begin{definition}
{\rm
By using the $*$-isomorphism $\calm(A) \cong \call(A)$, the multiplier algebra
turns to a $G$-algebra $(\calm(A), \overline\alpha) \cong (\call(A),\overline \alpha)$ 
under the $G$-action
$\overline \alpha:G \rightarrow \mbox{End}(\call(A))$ determined
by $\overline \alpha_g(T) := \alpha_g \circ T \circ \alpha_{g^{-1}}$ for all $g \in G$ and $T \in \call(A)$
as in Kasparov \cite[1.4]{kasparov1980}.
}
\end{definition}

Thomsen interprets $\overline \alpha_g$ as the strictly continuous extension of 
$\alpha_g$ from $A$ to $\calm(A)$.
We shall however always use the aforementioned explicit formula
in our computations.
This $G$-action on the multiplier algebra induces a canonical $G$-action $\widehat \alpha:G \rightarrow \mbox{End}
(Q(A))$ on the corona algebra, which turns it to a $G$-algebra $(Q(A),\widehat \alpha)$.
The $G$-equivariant quotient map between multiplier and corona algebra is denoted by
$q_A:\calm(A) \rightarrow Q(A)$.
A $G$-equivariant $*$-homomorphism $\varphi:A \rightarrow B$ which is quasi-unital
(i.e. $\overline{\mbox{span}}\,{\varphi(A) B} = p B$ for some projection $p \in \calm(B)$)
induces a well-known 
strictly continuous $*$-homomorphism $\overline \varphi:\calm(A) \rightarrow \calm(B)$
between the multiplier algebras, which once again induces a canonical $G$-equivariant $*$-homomorphism
$\widehat \varphi:Q(A)\rightarrow Q(B)$.

\begin{definition}   \label{defGinvariantoperator}
{\rm
An operator $T \in \call(\cale)$ on a $G$-Hilbert module $\cale$ with $G$-action $U$ is called {\em $G$-invariant}
if
$T \circ U_g = U_g \circ T$ (equivalently, $U_g T U_{g^{-1}} = T U_{g} U_{g^{-1}}$
or $U_g T U_{g^{-1}} = U_{g g^{-1}} T U_{g g^{-1}}$) for all $g \in G$.
}
\end{definition}

In case of a multiplier algebra, $G$-equivariance is also called as follows. 

\begin{definition}  \label{defbetainvariance}
{\rm
An operator in $T \in \calm ((B,\beta))$ is called {\em $\overline \beta$-invariant} if $T$
(and consequently $T^*$) commutes with $\beta_g$ for all $g \in G$ (equivalently: $\overline \beta_g(T)
= T \overline \beta_g(1)$ or  $\overline \beta_g(T)
= \overline \beta_{g g^{-1}}(T)$ for all $g \in G$).
}
\end{definition}

The operators $V_1, V_2 \in \calm((B,\beta))$ always denote $\overline \beta$-invariant isometries
such that
$V_1 V_1^* + V_2 V_2^* = 1$ (cf. \cite[Lemma 4.1]{burgiUniversalKK}).
They are used to form $K$-theoretical addition in $\calm(B) \cong M_2(\calm(B))$.
Note that we also write sometimes simply $g$ 
for the action maps $\alpha_g, \beta_g, U_g$ etc.

In \cite[Section 2]{thomsen} $G$-equivariant extensions of $G$-algebras $A$ and $B$ are introduced and
identified with the set of $G$-equivariant $*$-homomorphisms 
from $A$ to $Q(B)$
$\mbox{Hom}_G(A,Q(B))$.
The theory of this section
goes 
essentially literally through.
The \cite[Theorem 2.1]{thomsen} can be ignored since it is only a continuity statement
and hence trivial under our setting.

\begin{theorem}
Discussion of \cite[Theorem 2.2]{thomsen}.
\end{theorem}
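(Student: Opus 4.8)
The plan is to follow Thomsen's proof of \cite[Theorem 2.2]{thomsen} essentially verbatim, checking only that each construction stays inside the category of $G$-algebras of Definition \ref{defCstar} and that every structural map is $G$-equivariant in our sense. Thomsen's statement sets up the two mutually inverse passages between $G$-equivariant extensions $0 \to B \to E \to A \to 0$ and Busby invariants $\tau \in \mbox{Hom}_G(A,Q(B))$: from an extension one obtains $\tau$ from the canonical homomorphism $E \to \calm(B)$ given by the multiplier action of $E$ on its ideal $B$, which descends to $\tau: A = E/B \to \calm(B)/B = Q(B)$; conversely, from $\tau$ one recovers the extension as the pullback $E_\tau = \{(a,m) \in A \oplus \calm(B) : \tau(a) = q_B(m)\}$. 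The purely algebraic fact that these two passages are inverse to each other does not see the semigroup $G$ at all, so nothing changes there; the entire burden is the $G$-action.

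For the pullback direction I would equip $E_\tau$ with the diagonal action $\gamma_g(a,m) = (\alpha_g(a), \overline\beta_g(m))$. Well-definedness is immediate from equivariance of $q_B$ and $\tau$: if $\tau(a) = q_B(m)$ then $\tau(\alpha_g(a)) = \widehat\beta_g(\tau(a)) = \widehat\beta_g(q_B(m)) = q_B(\overline\beta_g(m))$, so $\gamma_g(a,m) \in E_\tau$. That $(E_\tau,\gamma)$ is a genuine $G$-algebra reduces coordinatewise to the defining identity $\gamma_{gg^{-1}}(x)y = x\gamma_{gg^{-1}}(y)$, which holds because $\alpha_{gg^{-1}}$ and $\overline\beta_{gg^{-1}}$ are central in $\calm(A)$ and $\calm(B)$ respectively.

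The only genuine obstacle is the equivariance of the Busby invariant $\tau$ produced from a given extension, since here Thomsen silently uses $\beta_g \beta_{g^{-1}} = 1$, whereas for us this product is only the central projection $\beta_{gg^{-1}}$. Following the first modification rule of the introduction, I would re-read each of Thomsen's equivariance identities by inserting $\overline\beta_{gg^{-1}}$ on the action-free side, and then eliminate it using centrality together with $\overline\beta_{gg^{-1}}\,\overline\beta_g = \overline\beta_g$; for instance the multiplicativity computation $\overline\beta_g(S)\overline\beta_g(T) = \beta_g S \beta_{g^{-1}g} T \beta_{g^{-1}} = \beta_g ST \beta_{g^{-1}} = \overline\beta_g(ST)$ goes through precisely by sliding the central projection $\beta_{g^{-1}g}$ past $T$. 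Once this bookkeeping is carried out consistently, the intertwining $\tau \circ \alpha_g = \widehat\beta_g \circ \tau$ follows exactly as in Thomsen, and the rest of Section 2 — in particular the semigroup structure on extensions and its compatibility with the correspondence — then transfers with no further change.
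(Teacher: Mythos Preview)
Your route differs from the paper's. The paper does not pass through the pullback $E_\tau$ at all; it follows Thomsen's original argument literally and shows directly that the $*$-isomorphism $\kappa:E_1\to E_2$ between two extensions sharing a Busby invariant is $G$-equivariant. Thomsen does this by forming $\mu=\kappa^{-1}\circ\gamma^2_g\circ\kappa\circ\gamma^1_{g^{-1}}$ and proving $\mu=id_{E_1}$. In the inverse semigroup setting $\gamma^1_{g^{-1}}$ is no longer surjective, so $\mu$ only makes sense as a map $gg^{-1}E_1\to gg^{-1}E_1$; to restrict everything to that corner the paper must \emph{first} prove that $\kappa$ is $E$-equivariant (commutes with every idempotent $p\in E$), which it does by a separate short argument using that $p^\bot\kappa p(E_1)\subseteq j_2(B)$ and that $\kappa^{-1}p^\bot\kappa$ is a self-adjoint projection in $\call_{E_1}(E_1)$ and hence central. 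This preliminary $E$-equivariance step is the new ingredient, and your proposal does not contain it.

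Your pullback approach can bypass that step, but as written it is imprecise about where the work lies. Equivariance of $\tau:A\to Q(B)$ is not the endpoint: for the bijection on \emph{isomorphism classes} you need the canonical comparison map $E\to E_\tau$, $e\mapsto(p(e),\mu(e))$, to be $G$-equivariant, and that comes down to the equivariance of the multiplier map $\mu:E\to\calm(B)$ itself (from which the equivariance of $\tau$ is then immediate, not the other way around). This is exactly where the compatibility axiom of Definition~\ref{defCstar} enters: one computes $\overline\beta_g(\mu(e))b=\gamma_g(e)\gamma_{gg^{-1}}(b)$ while $\mu(\gamma_g(e))b=\gamma_g(e)b$, and these agree because $\gamma_g(e)=\gamma_{gg^{-1}}(\gamma_g(e))$ together with $\gamma_{gg^{-1}}(x)y=x\gamma_{gg^{-1}}(y)$ forces $\gamma_g(e)b=\gamma_g(e)\gamma_{gg^{-1}}(b)$. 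Once this is made explicit your argument is complete and arguably cleaner than the paper's; the paper's only advantage is that it stays line-by-line parallel to Thomsen.
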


\begin{proof}
To prove the $G$-equivariance of a certain $*$-isomorphism $\kappa:E_1 \rightarrow E_2$
appearing in the diagram of \cite[Theorem 2.2]{thomsen}, we check first that it is $E$-equivariant.
Let $p \in E$ and write $p^{\bot}=1-p$. Then by the diagram of \cite[Theorem 2.2]{thomsen}, $p^\bot \kappa p(E_1) \subseteq j_2(B)$
and $p \kappa^{-1} p^\bot(j_2(B))=\{0\}$. Hence, $p \kappa^{-1} p^\bot \circ p^\bot \kappa p = 0$.
Notice that $\kappa^{-1} p^\bot \kappa$ is a self-adjoint projection in $\call_{E_1}(E_1)$, and hence commutes
with $p$, whence $p \kappa^{-1} p^\bot = 0$. Similarly $p^\bot \kappa^{-1} p = 0$ and so $\kappa^{-1}$
intertwines $p$.

Let $g \in G$.
We set then, as in Thomsen's proof, $\mu:=\kappa^{-1} \circ \gamma^2_g \circ \kappa \circ \gamma^1_{g^{-1}} : g g^{-1} E_1 \rightarrow g g^{-1} E_1$
(also the maps $j_1$ and $p_1$ in the diagram of the proof of \cite[Theorem 2.2]{thomsen} have to be restricted to $j_1:g g^{-1} B \rightarrow
g g^{-1} E_1$ and $p_1: g g^{-1} E_1 \rightarrow g g^{-1} A$)
and deduce that $\mu = id_{g g^{-1} E_1}$.
Hence $\gamma^2_g \circ \kappa = \kappa \circ \gamma^1_g$.
\end{proof}

%

\section{The equivariant extension groups}

Summing up \cite[Section 3]{thomsen}, for given $G$-algebras $A$ and $B$ an addition is introduced on the
set $\mbox{Hom}_G(A,Q(B))$ (which, recall, encodes extensions) and equivalence relations are imposed on it.
Two abelian extensions groups $\mbox{Ext}_G(A,B)$ and $\mbox{Ext}_G^h(A,B)$ come out, and it is shown
that the canonical map $\mbox{Ext}_G(A,B) \rightarrow \mbox{Ext}_G^h(A,B)$ is surjective.

In this section we have to modify the definition of unitary equivalence as follows.

\begin{definition}    \label{defUnitarilyEqu}
{\rm
Two $G$-extensions $\varphi,\psi \in \mbox{Hom}_G(A,Q(B))$ are said to be {\em unitarily equivalent}
if there exists a unitary $u \in \calm(B)$ such that for all $g \in G$

1. $\widehat{\beta}_g \big(q_B(u) \big) \,\,=\,\, \widehat \beta_{g g^{-1}} \big( q_b(u) \big)$,

2. $\big (\mbox{Ad } q_B(u) \big) \circ \varphi \,\, =  \,\, \psi$.

}
\end{definition}

Note that we made the only obvious modification in Definition \ref{defUnitarilyEqu} that
$\widehat  \beta_{g g^{-1}}$ was added in point 1.
The construction of certain equivariant paths of isometries and unitaries
in \cite[Lemma 3.3]{thomsen}
work also inverse semigroup equivariantly. (Note that the unitary here has nothing to do
with the fact that group actions are realized by unitaries.)
In the statements and proofs of Lemma 3.2, Lemma 3.4 and Proposition 3.5 of Thomsen's paper the identities
$\widehat \beta_g(q_B(X)) = q_B(X)$
and $\overline \beta_g(X) = X$, respectively, for $X=V,U,S,W,W_2,U_t,u$
have to be replaced by $\widehat \beta_g(q_B(X)) = \widehat \beta_{g g^{-1}} (q_B(X))$ and
$\overline \beta_g(X) = \overline \beta_{g g^{-1}}( X)$, respectively.
All these modifications are obvious.
The extra terms $\widehat \beta_{g g^{-1}}$
and $\overline \beta_{g g^{-1}}$ may then slightly alter some computations, but since we shall demonstrate
various similar computations
in the next sections we omit presenting further details in this section.
%


\section{An appropriate picture of $KK^1$}

Summarizing Section 4 of \cite{thomsen}, the notion of the one-sided graded Kasparov group
$KK_G^1(A,B):= KK^G(A,B \oplus B)$ is defined, and another group $\widetilde{KK}_G^1(A,B)$
of equivariant $A,B$-$KK^1$-cocycles is introduced. An isomorphism $\widetilde{KK}_G^1(A,B) \cong KK_G^1(A,B)$
between both groups is then established.

In this section
Thomsen recalls the definition of $KK^G$-theory, for which we use
Definition \ref{defCycle}.
The definition of a Kasparov cycle $(\cale,T) \in \E^G(A,B)$ requires that $(g(T) - T g g^{-1}(1)) a \in \calk(\cale)$
for all $a \in A$ and $g\in G$. Thomsen remarks that we can drop here $a$ at all and require the stronger version
\begin{equation}    \label{equkasparovstronger}
g(T) - T g g^{-1}(1) 
\;\in\; \calk(\cale)
\end{equation}
for all Kasparov cycles without changing $KK^G$-theory. This is true also in our
case as pointed out in \cite{burgiUniversalKK}.
In Section 4 Thomsen introduces the notion of a unitary cocycle, for which we have to use the following
modified version (as already pointed out in \cite{burgiUniversalKK}).

\begin{definition}	\label{defunitarycocycle}
{\rm
Let $(B,\beta)$ be a $G$-algebra.
A {\em unitary $\beta$-cocycle} is a map $u: G \rightarrow \calm(B)$
such that
$$\beta_{g g^{-1}} = u_g^* u_g, \quad u_{g g^{-1}} = u_g u_g^*, \quad u_{g h} = u_g \overline\beta_g(u_h)$$
for all $g, h \in G$.
}
\end{definition}
This definition implies that $u_g$ is a partial isometry with identical range and source projection.
Furthermore, we have the identities
\begin{equation}  \label{remarkcocycle}
\beta_{g g^{-1}} = u_g^* u_g = u_g u_g^* = u_{g g^{-1}} \quad \mbox{and}\quad \overline\beta_g(u_{g^{-1}}) = u_g^*
\end{equation}
for all $g \in G$, see \cite[Lemma 3.3]{burgiUniversalKK}.
Thus, in our setting it would be more natural to simply speak about ``$\beta$-cocycles" instead of
``unitary $\beta$-cocycles". 
Informally we remark that a unitary $\beta$-cocycle will be used to combine it with the $G$-action $\overline \beta$
to form a new $G$-action $g \mapsto (\mbox{Ad } u_g) \circ \overline \beta_g$ on $\calm(B)$.
Cocycles come into play as ballast under some transformations of Kasparov cycles
which are not $G$-equivariant, 
notably in \cite[Theorem 4.3]{thomsen} as
$u: G \rightarrow \calm(B)$.

The definition of an equivariant $A,B$-$KK^1$-cocycle changes slightly:
%
%


\begin{definition}   \label{defKKcocycle}
{\rm
An {\em equivariant $A,B$-$KK^1$-cocycle} is a triple $(\pi,v,p)$ consisting of a
$*$-homomorphism $\pi:A \rightarrow \calm(B)$, a unitary $\beta$-cocycle
$v: G \rightarrow \calm(B)$ (Definition \ref{defunitarycocycle}) and a projection $p \in \calm(B)$ such that
\begin{itemize}
\item[1.] $\big ( \mbox{Ad } v_g \circ \overline\beta_g \big) \big( \pi  (a) \big ) \; = \; \pi \big (\alpha_g(a) \big )
\qquad (\forall a \in A, g \in G)$

\item[2.]
$\big (\mbox{Ad } v_g \circ \overline\beta_g \big) (p) \,\, - \,\, \overline \beta_{g g^{-1}} (p)  \,\, \in \,\,  B \otimes \calk  \qquad (\forall g \in G)$

\item[3.]
$p \pi(a) - \pi(a) p \,\, \in \,\, B \otimes \calk \qquad (\forall a \in A)$.

\end{itemize}
}
\end{definition}


Only the additional operator $\overline \beta_{g g^{-1}}$ in the second point and the usage
of the modified notion of $\beta$-cocycles have changed.
These $KK^1$-cocycles are also called triples in \cite{thomsen}.
The definition of a degenerate cocycle has to be similarly slightly and obviously adapted as follows.

\begin{definition}   \label{defDegKKcocycle}
{\rm
An equivariant $A,B$-$KK^1$-cocycle $(\pi,v,p)$
is called {\em degenerate} when
\begin{eqnarray}
\label{s1}
\big( \mbox{Ad } v_g \circ \overline \beta_g \big ) (p) & = & 
\overline \beta_{g g^{-1}}(p)
\qquad(\forall g \in G),\\
\nonumber   p \pi(\cdot) &=& \pi(\cdot) p.
\end{eqnarray}
}
\end{definition}

The sum of $KK^1$-cocycles is realized by any pair $(V_1,V_2)$ of $G$-invariant isometries
in $\calm(B)$ such that $V_1 V_1^* + V_2 V_2^* = 1$. 


\begin{lemma}
In \cite[Lemma 4.2]{thomsen} Thomsen shows that the sum of an $A,B$-$KK^1$-cocycle with a degenerate $A,B$-$KK^1$-cocycle is
homotopic to itself.
\end{lemma}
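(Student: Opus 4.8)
The plan is to transcribe Thomsen's homotopy from \cite[Lemma 4.2]{thomsen} and to check that it survives the inverse semigroup modifications, the substance being the verification of condition 2 of Definition \ref{defKKcocycle} with its extra factor $\overline\beta_{gg^{-1}}$. Write the given cocycle as $(\pi,v,p)$ and the degenerate one as $(\pi',v',p')$, the latter satisfying (\ref{s1}) exactly. Using the $G$-invariant (equivalently, $\overline\beta$-invariant) isometries $V_1,V_2 \in \calm(B)$ with $V_1 V_1^* + V_2 V_2^* = 1$, the sum is the triple $(\Pi,w,P)$ with
\[
\Pi(a) = V_1 \pi(a) V_1^* + V_2 \pi'(a) V_2^*,\quad
w_g = V_1 v_g V_1^* + V_2 v'_g V_2^*,\quad
P = V_1 p V_1^* + V_2 p' V_2^*.
\]
First I would record that $w$ is again a unitary $\beta$-cocycle in the sense of Definition \ref{defunitarycocycle}: since each $V_i$ commutes with $\beta_g$ and $\beta_{gg^{-1}}$ is central, $w_g^* w_g = w_g w_g^* = V_1 \beta_{gg^{-1}} V_1^* + V_2 \beta_{gg^{-1}} V_2^* = \beta_{gg^{-1}}$, and the cocycle identity $w_{gh} = w_g \overline\beta_g(w_h)$ follows from the same identity for $v,v'$ together with $\overline\beta_g(V_i) = V_i \beta_{gg^{-1}}$ and the relations (\ref{remarkcocycle}).

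Next I would produce the interpolating homotopy, a cocycle over $A$ and $IB = C([0,1],B)$. Following Thomsen, the representation $\Pi$ and the cocycle $w$ can be kept fixed along the path, and only the projection is moved: exploiting the weak stability $B \cong B \otimes \calk$, one chooses a strictly continuous path of isometries that slides the degenerate summand's projection $V_2 p' V_2^*$ off to infinity, so that $P$ is connected to $V_1 p V_1^*$. Because $(\pi',v',p')$ is degenerate, the moving summand satisfies conditions 2 and 3 \emph{exactly} (not merely modulo $B \otimes \calk$) throughout the path, which is what keeps $(\Pi,w,P_t)$ a genuine $KK^1$-cocycle for every $t$. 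At the endpoint the triple is $(\pi,v,p)$ together with a summand having zero projection; reducing by compression with the $G$-invariant isometry $V_1$ identifies this with $(\pi,v,p)$ itself, and the verification that $V_1$ intertwines the cocycle data is again the by-now-familiar shuffling of the central projection $\beta_{gg^{-1}}$.

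The one genuinely new point — and the step I expect to be the main obstacle — is condition 2 in the modified form of Definition \ref{defKKcocycle}, namely
\[
\big(\mbox{Ad }w_g \circ \overline\beta_g\big)(P_t) - \overline\beta_{gg^{-1}}(P_t) \in B \otimes \calk \qquad (\forall g \in G).
\]
In Thomsen's setting the subtracted term is simply $P_t$ and disappears into the computation; here one must carry $\overline\beta_{gg^{-1}}$ along. The argument is to split $P_t$ over the two summands, use that the first summand reproduces $(\mbox{Ad }v_g \circ \overline\beta_g)(p) - \overline\beta_{gg^{-1}}(p) \in B \otimes \calk$ and that the degenerate second summand contributes an \emph{exact} zero by (\ref{s1}), and then to reassemble. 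The reassembly is where centrality of $\overline\beta_{gg^{-1}}$ is essential: it must be commuted past $V_1, V_2$ and past the sliding isometries, which is legitimate precisely because $\overline\beta_{gg^{-1}} \in Z\calm(B)$ and the isometries are $\overline\beta$-invariant, so that $(\mbox{Ad }w_g \circ \overline\beta_g)(V_i X V_i^*)$ unfolds into $V_i (\mbox{Ad }v_g \circ \overline\beta_g)(X) V_i^*$ up to a factor $\beta_{gg^{-1}}$ that is then absorbed. Once this bookkeeping is in place, conditions 1 and 3 go through verbatim as in \cite{thomsen}, and the homotopy is complete.
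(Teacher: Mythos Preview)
Your core observation is correct and is exactly the paper's proof: the only modification needed in Thomsen's argument is to replace the subtracted term $(S_t q S_t^* + T_t p T_t^*)$ by $\overline\beta_{gg^{-1}}(S_t q S_t^* + T_t p T_t^*)$ in the verification of condition~2, and the paper literally consists of this one-line alteration. The centrality-and-absorption reasoning you outline for why the extra $\overline\beta_{gg^{-1}}$ causes no trouble is the right mechanism.

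Your description of the homotopy, however, does not match Thomsen's construction. From the formula the paper quotes one sees that the cocycle at time $t$ is $S_t u_g S_t^* + T_t v_g T_t^*$ and the projection is $S_t q S_t^* + T_t p T_t^*$: all three components of the triple are conjugated by the \emph{same} strictly continuous path of isometries $(S_t,T_t)$, not just the projection. Your claim that ``the representation $\Pi$ and the cocycle $w$ can be kept fixed along the path'' is therefore inaccurate as a summary of Thomsen, and taken literally it would create a mismatch in condition~2 between the $V_1,V_2$ corner structure of the fixed $w$ and the moving corner structure of $P_t$, unless the sliding is confined entirely to the $V_2$ corner---which you do not arrange. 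This is a wrinkle in your sketch rather than a genuine gap: once you follow Thomsen's actual path $(S_t,T_t)$, the $\overline\beta_{gg^{-1}}$ bookkeeping you describe is precisely what remains to check, and it goes through as you say.
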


%
%
%

\begin{proof}
In accordance to (\ref{s1}), 
we only have to alter the line
$$\lim_{t \rightarrow 0} \,\mbox{Ad } [S_t u_g S_t^* + T_t v_g T_t^*] \circ \overline\beta_g (S_t q S_t^* + T_t p T_t^*)
- (S_t q S_t^* + T_t p T_t^*)$$
in the proof of \cite[Lemma 4.2]{thomsen} to
$$\lim_{t \rightarrow 0} \,\mbox{Ad } [S_t u_g S_t^* + T_t v_g T_t^*] \circ \overline\beta_g (S_t q S_t^* + T_t p T_t^*)
- \overline\beta_{g g^{-1}} (S_t q S_t^* + T_t p T_t^*).$$
\end{proof}

\begin{lemma}
After \cite[Lemma 4.2]{thomsen}, Thomsen defines a map $\Phi: \widetilde{KK}^1_G(A,B) \rightarrow KK^1_G(A,B)$
by setting $\Phi[\pi,v,p] = [E_B,\tilde \pi,S_p]$, where $E_B:= B \oplus B$, $\tilde \pi = \pi \oplus \pi$ and
$S_p(b_1,b_2) = ((2p-1) b_1,(1-2p)b_2)$ for $b_1,b_2 \in B$.
The Hilbert module $E_B$ is endowed with the $G$-action $\tilde v_g := v_g \beta_g$.
\end{lemma}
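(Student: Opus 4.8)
The content of the statement, in our setting, is that $\Phi$ is \emph{well defined}: one must check that the triple $(E_B,\tilde\pi,S_p)$, with $E_B=B\oplus B$ carrying the $G$-action $\tilde v_g=v_g\beta_g$, is a genuine element of $\E^G(A,B\oplus B)$ in the sense of Definition \ref{defCycle}, and then that it respects homotopy and the group structure as in \cite{thomsen}. The plan is to verify in turn that $\tilde v$ is a $G$-action on the Hilbert module $E_B$, that $\tilde\pi$ is $G$-equivariant, and that $S_p$ satisfies the operator conditions; the three defining properties of a $KK^1$-cocycle (Definition \ref{defKKcocycle}) will each feed into exactly one of these checks.

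First I would confirm that $\tilde v_g(b_1,b_2)=(v_g\beta_g(b_1),v_g\beta_g(b_2))$ makes $E_B$ a $G$-Hilbert $(B\oplus B)$-module (Definition \ref{defHilbert}). The homomorphism property $\tilde v_g\tilde v_h=\tilde v_{gh}$ is precisely the cocycle identity $v_{gh}=v_g\overline\beta_g(v_h)$ of Definition \ref{defunitarycocycle}, once one uses $\beta_g(v_h c)=\overline\beta_g(v_h)\beta_g(c)$ for $c\in B$. The inner-product compatibility reduces, on each summand, to $\beta_g(b^*)\,v_g^*v_g\,\beta_g(c)=\beta_g(b^*c)$, which holds because $v_g^*v_g=\beta_{gg^{-1}}$ and $\beta_{gg^{-1}}\beta_g=\beta_g$. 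The source/range identity $\tilde v_{gg^{-1}}(\xi)b=\xi\,\beta_{gg^{-1}}(b)$ follows from $v_{gg^{-1}}=\beta_{gg^{-1}}(1)$ (equation \eqref{remarkcocycle}), so that $\tilde v_{gg^{-1}}$ acts as $\beta_{gg^{-1}}$ on each summand, together with the centrality relation $\beta_{gg^{-1}}(x)y=x\,\beta_{gg^{-1}}(y)$ of Definition \ref{defCstar}.

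Next I would verify the ($G$-modified) equivariance of $\tilde\pi$ from condition~1. A direct computation of $\tilde v_g\tilde\pi(a)\tilde v_{g^{-1}}$ on a summand, using $\overline\beta_g(v_{g^{-1}})=v_g^*$, collapses to $\big(\mbox{Ad }v_g\circ\overline\beta_g\big)(\pi(a))\,\beta_{gg^{-1}}(1)=\pi(\alpha_g(a))\,\beta_{gg^{-1}}(1)$, that is, $\tilde v_g\tilde\pi(a)\tilde v_{g^{-1}}=\tilde\pi(\alpha_g(a))\,gg^{-1}(1)$, which is exactly the equivariance forced by the first modification bullet (an $\alpha_g$ on one side produces the projection $gg^{-1}(1)$ on the other). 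The self-adjointness of $S_p$ and $S_p^2=1$ are immediate from $p^2=p$, and $[\tilde\pi(a),S_p]=\pm\,2\big(p\pi(a)-\pi(a)p\big)$ on the two summands, which lies in $B\otimes\calk\cong\calk(E_B)$ by condition~3.

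The main point, and the place where the inverse semigroup modifications genuinely bite, is the $G$-invariance-up-to-compacts requirement $g(S_p)-S_p\,gg^{-1}(1)\in\calk(E_B)$ of \eqref{equkasparovstronger}. Computing $g(S_p)=\tilde v_g S_p\tilde v_{g^{-1}}$ exactly as for $\tilde\pi$, one obtains on the first summand $\big(\mbox{Ad }v_g\circ\overline\beta_g\big)(2p-1)=2\big(\mbox{Ad }v_g\circ\overline\beta_g\big)(p)-\beta_{gg^{-1}}(1)$, using $\big(\mbox{Ad }v_g\circ\overline\beta_g\big)(1)=v_gv_g^*=\beta_{gg^{-1}}(1)$, while $S_p\,gg^{-1}(1)$ contributes $(2p-1)\beta_{gg^{-1}}(1)=2\,\overline\beta_{gg^{-1}}(p)-\beta_{gg^{-1}}(1)$, since $\beta_{gg^{-1}}$ is multiplication by the central projection $\beta_{gg^{-1}}(1)$ and $p\,\beta_{gg^{-1}}(1)=\overline\beta_{gg^{-1}}(p)$. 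The difference telescopes to $2\big[\big(\mbox{Ad }v_g\circ\overline\beta_g\big)(p)-\overline\beta_{gg^{-1}}(p)\big]$, which is compact precisely by condition~2 --- and this is exactly why the extra term $\overline\beta_{gg^{-1}}(p)$ had to be inserted there. I expect the only delicacy to be the bookkeeping of the central projection $\beta_{gg^{-1}}(1)$, keeping it in the right place and invoking its centrality to absorb it, rather than any conceptual difficulty; once the three conditions are matched against Definition \ref{defCycle}, the fact that $\Phi$ descends to homotopy classes and the compatibility with addition follow verbatim as in \cite{thomsen}.
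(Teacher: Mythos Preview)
Your proposal is correct and follows essentially the same approach as the paper: the paper's proof focuses solely on the $G$-invariance-up-to-compacts condition $g(S_p)-S_p\,gg^{-1}(1)\in\calk(E_B)$ and carries out precisely the computation you give, reducing the difference on the first summand to $2\big[(\mbox{Ad }v_g\circ\overline\beta_g)(p)-\overline\beta_{gg^{-1}}(p)\big]\in B$ via condition~2 of Definition~\ref{defKKcocycle} together with the identities \eqref{remarkcocycle}. The only difference is that the paper delegates the verification that $(E_B,\tilde\pi)$ is a $G$-Hilbert bimodule to \cite{burgiUniversalKK}, whereas you spell out those checks explicitly; your more detailed account is entirely in the spirit of the paper's general modifications and does not deviate from its method.
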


\begin{proof}
Let us check that $[E_B,\tilde \pi,S_p]$ is a Kasparov cycle, where we only need to bother about the $G$-action.
That $(E_B,\tilde \pi)$ is a $G$-Hilbert bimodule we have already discussed in \cite{burgiUniversalKK}.
So let us only check identity $g(S_p) - S_p g(1) \in \calk(E_B)$ from (\ref{equkasparovstronger}).
We have, by considering only the first factor $B$ of $E_B$,
\begin{eqnarray*}
\tilde v_g S_p \tilde v_{g^{-1}}(b_1) &=& v_g \beta_g\big ( (2p-1) v_{g^{-1}} \beta_{g^{-1}}(b_1) \big )  \\
&=& 2 v_g \overline\beta_g (p) \overline\beta_g(v_{g^{-1}}) \beta_{g g^{-1}}(b_1)
- v_g \overline\beta_g(v_{g^{-1}}) \beta_{g g^{-1}}(b_1)   \\
&=& 2 \big (\mbox{Ad }v_g  \circ \overline\beta_g \big ) (p) \, \beta_{g g^{-1}}(b_1)
- v_g v_g^* \beta_{g g^{-1}}(b_1) \\
&\equiv& 2p \beta_{g g^{-1}}(b_1)   - \beta_{g g^{-1}}(b_1) 
\quad = \quad S_p \tilde v_g \tilde v_{g^{-1}}(b_1)
\end{eqnarray*}
for all $b_1 \in B$,
because
$\overline \beta_g(v_{g^{-1}}) = v_g^*$, $v_g v_g^* = \beta_{g g^{-1}}$ (see (\ref{remarkcocycle})), by Definition \ref{defKKcocycle}
and by using that $\overline \beta$ is the extension of $\beta$ from $B$ to $\calm(B)$.
\end{proof}

The proof of \cite[Theorem 4.3]{thomsen} is very similar to that of \cite[Theorem 3.5]{thomsen2}
and was discussed inverse semigroup equivariantly in \cite[Theorem 4.4]{burgiUniversalKK}, so we omit a rediscussion.

\begin{lemma}
We discuss \cite[Lemma 4.4]{thomsen}.
\end{lemma}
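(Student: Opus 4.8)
The plan is to reproduce Thomsen's proof of \cite[Lemma 4.4]{thomsen} verbatim at the level of the underlying (non-equivariant) Kasparov cycles, and to intervene only in the bookkeeping of the $G$-action. Following the recipe from the introduction, each equivariance identity of the form $\overline\beta_g(X)=X$ that Thomsen invokes --- and which silently rests on $g g^{-1}=1$ --- is to be weakened to its projected form $\overline\beta_g(X)=\overline\beta_{g g^{-1}}(X)$, and correspondingly $\widehat\beta_g(q_B(X))=\widehat\beta_{g g^{-1}}(q_B(X))$; every unitary cocycle occurring in the argument is to be read as a unitary $\beta$-cocycle in the sense of Definition \ref{defunitarycocycle}, so that in place of $u_g^*u_g=1$ one has at one's disposal the partial-isometry relations $u_g^*u_g=u_gu_g^*=\beta_{g g^{-1}}$ and $\overline\beta_g(u_{g^{-1}})=u_g^*$ recorded in (\ref{remarkcocycle}).

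First I would revisit each step in which a multiplier is asserted to be fixed by the action, or in which a projection is compared with its translate, and insert the central idempotent $\overline\beta_{g g^{-1}}$ on the side lacking an $\alpha_g$ or $\beta_g$; the defect conditions ``$\ldots\in B\otimes\calk$'' of Definition \ref{defKKcocycle} and the degeneracy equalities of Definition \ref{defDegKKcocycle} are to be imposed in precisely this projected shape. The computations that then have to be checked are of the same type as the one carried out above for $\tilde v_g S_p \tilde v_{g^{-1}}$: one expands $\mbox{Ad}\,v_g\circ\overline\beta_g$, replaces $\overline\beta_g(v_{g^{-1}})$ by $v_g^*$ and $v_g v_g^*$ by $\beta_{g g^{-1}}$, and collects the surviving idempotent as the required $\overline\beta_{g g^{-1}}$ term. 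The structural fact that makes this routine is that $\beta_{g g^{-1}}$ is central in $\calm(B)$, so it may be slid freely past $\pi(a)$, $p$ and the cocycle values until it meets a neighbouring $v_g$ or $\beta_g$ and is absorbed, via $\beta_{g g^{-1}}v_g=v_g$ and $\overline\beta_{g g^{-1}}\circ\overline\beta_g=\overline\beta_g$.

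The hard part, I expect, is the one place where Thomsen uses $g g^{-1}=1$ not merely to erase an action symbol but to treat a cocycle value as a genuine unitary --- for instance to invert it, or to split its range projection off as a direct summand. There the passage to partial isometries is not cosmetic: $v_g$ now has source and range projection $\beta_{g g^{-1}}$ rather than $1$, so any such inversion must be read as taking place inside the corner $\beta_{g g^{-1}}\calm(B)\beta_{g g^{-1}}$, and one must verify that the operators Thomsen produces actually land in, and are compatible with, that corner. Once this is made explicit --- again using centrality to recognise $\beta_{g g^{-1}}$ as the unit of the relevant corner, and the cocycle relation $u_{gh}=u_g\overline\beta_g(u_h)$ of Definition \ref{defunitarycocycle} to control compositions --- the remainder of the argument goes through unchanged, since modulo the action all manipulations are those of the classical proof.
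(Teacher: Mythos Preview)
Your proposal is not wrong in spirit, but it is too generic to count as a proof of this particular lemma: you have essentially restated the general recipe from the introduction (insert $\overline\beta_{g g^{-1}}$, use the cocycle identities, exploit centrality) without identifying the one concrete modification that \cite[Lemma 4.4]{thomsen} actually requires.

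The paper's discussion is much more specific. The single point of intervention is the unitary $T\in M_2(\calm(B))$ produced by Kasparov's \cite[Lemma 6.1]{kasparov1981}: in Thomsen's setting $T$ is $G$-invariant only modulo $M_2(B)$, and here the $G$-action on $M_2(\calm(B))$ is the \emph{diagonal} cocycle-twisted action with two \emph{different} cocycles $v_g$ and $w_g^2$ on the two coordinates. What has to be written down explicitly is the inverse-semigroup form of this invariance-modulo-compacts condition, namely
\[
\begin{pmatrix} v_g & 0 \\ 0 & w_g^2 \end{pmatrix}
\overline{\beta_g\otimes id_{M_2}}(T)
\begin{pmatrix} v_g^* & 0 \\ 0 & (w_g^2)^* \end{pmatrix}
\;-\;
T
\begin{pmatrix} v_g\beta_{g g^{-1}}v_g^* & 0 \\ 0 & w_g^2\beta_{g g^{-1}}(w_g^2)^* \end{pmatrix}
\;\in\; M_2(B),
\]
together with the observation that $v_g^*v_g=(w_g^2)^*w_g^2=\beta_{g g^{-1}}=\overline\beta_g(1)$ are central (identities (\ref{remarkcocycle})), which is exactly what is needed to recover relation ``3.'' of \cite[Lemma 4.4]{thomsen}. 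Your text never mentions $T$, the $2\times 2$ matrix structure, the presence of two distinct cocycles, or Kasparov's lemma; and the ``hard part'' you anticipate --- inverting a cocycle inside a corner --- does not in fact arise here. So the missing idea is not subtle, but it is specific: name the object whose invariance condition changes and write that condition down.
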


\begin{proof}
Thomsen notes that an application of Kasparov's \cite[Lemma 6.1]{kasparov1981} yields a unitary
in $T \in M_2(\calm(B))$ which is $G$-invariant in Kasparov's case but only $G$-invariant modulo compacts
in Thomsen's case.
Similarly, 
when inspecting Kasparov's proof of his lemma (the $G$-action on $\calm(B)$ is $v \beta$
and $T$ is derived from the 
operator of a Kasparov cycle), in our case $G$-invariance of $T$ means
(according to Definition \ref{defCycle}) that
$$
\quad
\left (
\begin{matrix}
	v_g & 0 \\
	0  & w_g^2
\end{matrix}
\right )
\overline{\beta_g \otimes id_{M_2}}(T)
\left (
\begin{matrix}
	v_{g}^* & 0 \\
	0  & {(w_{g}^2)}^*
\end{matrix}
\right )
\; - \; T
\left (
\begin{matrix}
	v_{g} \beta_{g g^{-1}} v_{g}^* & 0 \\
	0  & w_{g}^2 \beta_{g g^{-1}} {(w_{g}^2)}^*
\end{matrix}
\right )
$$
lies in $M_2(B)$.
Recall that $v_g$ and $w_g^2$ are partial isometries with source projections 
$v_{g}^* v_g = \beta_{g g^{-1}} = \overline\beta_g(1) = (w_{g}^2)^* w_g^2 \in \calm(B)$ in the center of $\calm(B)$ by (\ref{remarkcocycle}).
%
%
Hence we then also obtain easily relation ``3." of \cite[Lemma 4.4]{thomsen}.
%
\end{proof}


\section{Twisted $G$-extensions and $KK^1$}

The aim of \cite[Section 5]{thomsen} is the introduction of a twisted extension group $\mbox{Ext}_{G,t}(A,B)$
and the establishment of an isomorphism $\mbox{Ext}_{G,t}(A,B) \cong \widetilde{KK}_G^1(A,B)$.
Moreover, is is pointed out that two elements in $\mbox{Ext}_{G,t}(A,B)$ are identical if and only if their representatives are homotopic.

The notions of twisted extensions, unitary equivalence of twisted extensions,
and degenerate twisted extensions
need not be formally altered, excepting the
implicitly self-evident fact
that we have to use the modified notion of $\beta$-cycles of Definition \ref{defunitarycocycle}.
In the proofs of this section we have however slight adaption.

At first let us observe that unitary equivalence between two twisted extensions
$(\varphi,u)$ and $(\psi,v)$, where $\varphi,\psi:A \rightarrow Q(B)$ are $G$-equivariant $*$-homomorphisms
and $u, v$ are $\beta$-cocycles, is an equivalence relation.
Equivalence is defined through the existence of
a unitary $u \in \calm(B)$ such that $\mbox{Ad}\, q_B(u) \circ \varphi= \psi$
and $v_g \overline \beta_g(u) - u u_g \in B$ for all $g \in G$.
Let us demonstrate symmetry of the second relation. Taking the second relation for granted, we get
\begin{eqnarray*}
B &\ni& \overline \beta_{g^{-1}} (v_g \overline \beta_g(u) - u u_g)^* \;=\;
\overline  \beta_{g^{-1}} ( \overline \beta_g(u^*) \overline \beta_g(v_{g^{-1}})
- \overline \beta_g(u_{g^{-1}}) u^* ) \\
&=& u^* \overline \beta_{g^{-1}g}( v_{g^{-1}})
- u_{g^{-1}} \overline \beta_{g^{-1}}( u^* )  \;=\;
u^* v_{g^{-1}}
- u_{g^{-1}} \overline \beta_{g^{-1}}( u^* )
\end{eqnarray*}
by identities (\ref{remarkcocycle}), the compatibility of the $\overline \beta$-action, and the fact
that $\overline \beta_{g^{-1}g}( v_{g^{-1}}) = \beta_{g^{-1}g} v_{g^{-1}} \beta_{g^{-1}g}  =
v_{g^{-1}}$ by $\beta_{g^{-1}g}= v_{g^{-1}} v_{g^{-1}}^*$ by identities (\ref{remarkcocycle}), as required.

\begin{proposition}
Between \cite[Lemma 5.1]{thomsen} and \cite[Lemma 5.2]{thomsen}, Thomsen shows that there is a group homomorphism
$\Lambda: \mbox{Ext}_{G,t}\big((A,\alpha),(B,\beta)\big) \longrightarrow \widetilde{KK}_G^1(A,B)$.
\end{proposition}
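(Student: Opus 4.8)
The plan is to reproduce Thomsen's construction of $\Lambda$ at the level of representatives and then verify that each ingredient survives the passage to a countable, $E$-continuous inverse semigroup, making only the two bookkeeping modifications announced in Section \ref{section2}. Given a twisted $G$-extension $(\varphi,u)$, with $\varphi:A\rightarrow Q(B)$ a $*$-homomorphism that is equivariant for the $u$-twisted corona action and $u$ a $\beta$-cocycle in the sense of Definition \ref{defunitarycocycle}, I would first choose an ordinary (not necessarily equivariant) completely positive contractive lift of $\varphi$ along $q_B$, using separability and nuclearity of $A$. Applying the Stinespring/Kasparov (KSGNS) dilation to this lift and absorbing the resulting countably generated dilation module into $B$ via weak stability, I obtain an honest $*$-homomorphism $\pi:A\rightarrow\calm(B)$ together with a projection $p\in\calm(B)$ whose compression recovers $\varphi$, i.e. $q_B\big(p\,\pi(\cdot)\,p\big)=\varphi$ under the corner identification. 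I would then set $v:=u$ and declare $\Lambda[\varphi,u]:=[\pi,v,p]$. The non-equivariance of the chosen lift is exactly what the twist $u$ records, so no equivariant lifting theorem is needed.

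Next I would verify the three conditions of Definition \ref{defKKcocycle}. Condition~3, $p\pi(a)-\pi(a)p\in B\otimes\calk$, is precisely the multiplicativity of the compressed map $q_B(p)\,q_B(\pi(\cdot))$, which is how $\varphi$ was recovered, and is therefore automatic. Condition~1, $\big(\mbox{Ad }v_g\circ\overline\beta_g\big)(\pi(a))=\pi(\alpha_g(a))$, follows from the $u$-twisted equivariance of $\varphi$ together with the transport of the $G$-action through the dilation; the only change from Thomsen is that $v_g=u_g$ is now a partial isometry with $v_g^*v_g=v_gv_g^*=\beta_{g g^{-1}}$ by (\ref{remarkcocycle}), so the central source projection $\beta_{g g^{-1}}$ has to be carried along and absorbed via its centrality. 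The substantive point is Condition~2, $\big(\mbox{Ad }v_g\circ\overline\beta_g\big)(p)-\overline\beta_{g g^{-1}}(p)\in B\otimes\calk$: this is exactly where the announced replacement of ``$=p$'' by ``$=\overline\beta_{g g^{-1}}(p)$'' enters, and I would establish it by mimicking Thomsen's computation while inserting $\beta_{g g^{-1}}$ through the cocycle identity $\overline\beta_g(v_{g^{-1}})=v_g^*$ and sliding the central projection $\overline\beta_{g g^{-1}}$ into position, in the same manner as the $S_p$ computation in the lemma preceding this proposition. If $(\varphi,u)$ is degenerate, the same computation yields Condition~(\ref{s1}) on the nose, so $\Lambda$ also sends degenerate extensions to degenerate cocycles.

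I would then check that $\Lambda$ descends to the quotients defining $\mbox{Ext}_{G,t}$ and is additive. For well-definedness, suppose two twisted extensions $(\varphi,u)$ and $(\psi,u')$ are unitarily equivalent via a unitary $w\in\calm(B)$, so that $\mbox{Ad }q_B(w)\circ\varphi=\psi$ and $u'_g\,\overline\beta_g(w)-w\,u_g\in B$ (the relation whose symmetry was verified just before this proposition). I would show that conjugating the triple $\Lambda[\varphi,u]$ by $w$ yields $\Lambda[\psi,u']$, so that the two images are unitarily equivalent in the sense of the modified Definition \ref{defUnitarilyEqu}; point~1 there is guaranteed precisely by $u'_g\,\overline\beta_g(w)-w\,u_g\in B$ after passing to the corona. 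Homotopies are handled as in Thomsen with the obvious cocycle replacement. Additivity is verified by fixing $\overline\beta$-invariant isometries $V_1,V_2\in\calm(B)$ with $V_1V_1^*+V_2V_2^*=1$ and checking that the sum triple $\big(V_1\pi_1V_1^*+V_2\pi_2V_2^*,\;V_1(u_1)_gV_1^*+V_2(u_2)_gV_2^*,\;V_1p_1V_1^*+V_2p_2V_2^*\big)$ is, up to unitary equivalence, the image of the sum of the two twisted extensions; since $V_1,V_2$ are $\overline\beta$-invariant they commute with every $\beta_g$ and in particular do not disturb the $\beta_{g g^{-1}}$ bookkeeping.

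The main obstacle I expect is the interplay between the dilation and the partial-isometry nature of the cocycle. Because $u_gu_g^*=u_g^*u_g=\beta_{g g^{-1}}$ rather than $1$, the twisted assignment $g\mapsto\mbox{Ad }u_g\circ\overline\beta_g$ is only a partial action compressed by the central projections $\beta_{g g^{-1}}$, and I must arrange the Stinespring dilation so that this compressed action lifts to the dilated module, so that $v:=u$ remains a genuine $\beta$-cocycle there, and so that the resulting $p$ is $\overline\beta$-equivariant only modulo $B\otimes\calk$ as in Condition~2 rather than on the nose. Tracking the central projection $\overline\beta_{g g^{-1}}$ past partial isometries and recognizing the cancellations $\overline\beta_{g g^{-1}}\overline\beta_g=\overline\beta_g$ and $v_g\beta_{g g^{-1}}=v_g$ is the source of essentially all of the genuine computation; everything else is, as stressed throughout this note, a literal transcription of Thomsen's argument with the two modifications inserted.
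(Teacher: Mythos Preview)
Your construction of $\Lambda$ differs from the paper's in an essential way, and the difference matters. You attempt to lift $\varphi$ directly via a completely positive section of $q_B$ and a Stinespring dilation. The paper (following Thomsen) does something else entirely: given $(\varphi,u)$, it uses that elements of $\mbox{Ext}_{G,t}$ are by definition invertible, chooses an inverse $(\psi,v)$, and finds a degenerate twisted extension $(\lambda,w)$ unitarily equivalent to $(\varphi+\psi,u+v)$ via some unitary $T\in\calm(B)$. The point of ``degenerate'' is that $\lambda$ admits a genuine $*$-homomorphic lift $\overline\lambda:A\rightarrow\calm(B)$ satisfying the twisted equivariance $\mbox{Ad}\,w_g\circ\overline\beta_g\circ\overline\lambda=\overline\lambda\circ\alpha_g$ \emph{exactly}. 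One then sets $\Lambda[\varphi,u]=[\overline\lambda,\,w,\,TV_1V_1^*T^*]$, and the paper's computation $\mbox{Ad}\,w_g\circ\overline\beta_g(TV_1V_1^*T^*)\equiv\overline\beta_{gg^{-1}}(TV_1V_1^*T^*)$ modulo $B$ is precisely Condition~2 of Definition~\ref{defKKcocycle}. Well-definedness is checked by taking a second inverse $(\psi^1,v^1)$ and degenerate $(\lambda^1,w^1)$ and exhibiting a homotopy through the intermediate triple $(\mu,w^2,TV_1V_1^*T^*)$.

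Your route has two problems. First, you invoke nuclearity of $A$ to get a completely positive lift, but no nuclearity hypothesis is made here (it appears only in Kasparov's original compact-group theorem, not in Thomsen's or this paper's setting). Second, and more seriously, even granting a completely positive lift, the Stinespring dilation of a non-equivariant completely positive map has no reason to satisfy Condition~1 of Definition~\ref{defKKcocycle} as an \emph{equality} $(\mbox{Ad}\,v_g\circ\overline\beta_g)(\pi(a))=\pi(\alpha_g(a))$; you would only obtain this modulo $B$, which is not what the definition requires. Your claim that this ``follows from the $u$-twisted equivariance of $\varphi$ together with the transport of the $G$-action through the dilation'' is exactly the step that fails: the twisted equivariance lives only in $Q(B)$, and nothing in the dilation procedure upgrades it to $\calm(B)$. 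The paper's trick of passing through an inverse and a degenerate extension is precisely what manufactures an exactly equivariant lift without any nuclearity assumption, and you have bypassed it.
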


\begin{proof}
%
%
%
%
%
Thomsen considers a twisted extension $(\varphi,u) \in \mbox{Ext}_{G,t}(A,B)$, where
$\varphi:A \rightarrow Q(B)$ is a $G$-equivariant $*$-homomorphism and $u$ a $\beta$-cocycle.
Then an inverse twisted extension $(\psi,v)\in \mbox{Ext}_{G,t}(A,B)$ to this one is chosen, that is, there
is a degenerate twisted extension $(\lambda,w)\in \mbox{Ext}_{G,t}(A,B)$ such that
$(\varphi+\psi,u+v)$ is unitarily equivalent to $(\lambda,w)$.
One deduces that there exists a unitary $T \in \calm(B)$ such that
\begin{eqnarray*}
&& \mbox{Ad } q_b(T) \circ (\varphi +\psi) = \lambda,\\
&& w_g \overline\beta_g(T) - T(V_1 u_g V_1^* + V_2 v_g V_2^*) \quad \in B \quad (\forall g \in G)
\end{eqnarray*}
in Thomsen's paper as well as here as
the formal definitions around twisted extensions are unchanged.
%
%
Then Thomsen states that one has
\begin{eqnarray}
&& \mbox{Ad} \, w_g \circ \overline\beta_g(T V_1 V_1^* T^*)   \nonumber \\
&& = \,\,\, \mbox{Ad} \, T \circ \mbox{Ad} (u+v)_g \circ \overline\beta_g(V_1 V_1^*) \,\, = \,\, T V_1 V_1^* T^* \quad \quad (g \in G) \quad    \label{mod1}
\end{eqnarray}
in $\calm(B)$ modulo $B$.
This changes in our setting. 
%
We compute in $\calm(B)$ modulo $B$
\begin{eqnarray*}
&& \mbox{Ad} \, w_g \circ \overline\beta_g(T V_1 V_1^* T^*) \,\, = \,\,  w_g \overline\beta_g(T) \,
 V_1 V_1^* \, {\overline\beta_g(T)}^*  w_g^*    \\
&=& T (V_1 u_g V_1^* + V_2 v_g V_2^*) \,\, V_1 V_1^*
\,\,   (V_1 u_g^* V_1^* + V_2 v_g^* V_2^*) T^*    \\
&=& T V_1 u_g u_g^* V_1^*  T^*
\,\,= \,\, \beta_{g g^{-1}} T V_1 V_1^*  T^*  \beta_{g g^{-1}}
\,\, = \,\, \overline\beta_{g g^{-1}} (T V_1 V_1^*  T^*),
\end{eqnarray*}
because $\beta_{g g^{-1}} = u_g  u_g^*$, see (\ref{remarkcocycle}), is in the center of $\calm(B)$.
So we have an additional $\overline\beta_{g g^{-1}}$ in identity (\ref{mod1}).
Hence $(\overline \lambda,w,T V_1 V_1^* T^*)$ is a $KK^1$-cocycle
in the sense of Definition \ref{defKKcocycle}, where $\overline \lambda:A \rightarrow \calm(B)$
is so chosen that $\lambda = q_B \circ \overline \lambda$ and $\mbox{Ad}\, w_g \circ \overline \beta_g \circ
\overline \lambda = \overline \lambda \circ \alpha_g$. 
This $KK^1$-cocycle can be interpreted as an element
in $\widetilde{KK}^1_G(A,B)$ (homotopy classes of $KK^1(A,B)$-cocycles).

To see the well-definedness of the just constructed assignment $\mbox{Ext}_{G,t}(A,B) \rightarrow \widetilde{KK}^1_G(A,B)$,
Thomsen considers another pair $(\psi^1,v^1),(\lambda^1,w^1) \in \mbox{Ext}_{G,t}(A,B)$
instead of $(\psi,v),(\lambda,w) \in \mbox{Ext}_{G,t}(A,B)$.
To the other pair is associated a unitary $T^1 \in \calm(B)$ instead of $T$.
There are shown the existence of homotopies
$$(\overline{\lambda^1}, w^1, T^1 V_1 V_1^* {T^1}^*) \sim (\mu,w^2, T V_1 V_1^* T^*)
\sim (\overline{\lambda}, w, T V_1 V_1^* {T}^*),$$
where $S= T {T^1}^*$, $\mu= \mbox{Ad}\, S \circ \overline{\lambda^1}$ and $w^2=S w_g^1 \overline\beta_g(S^*)
\in \calm(B)$.
Let us check that $(\mu,w^2, T V_1 V_1^* T^*) \in \widetilde{KK}^1_G(A,B)$, so is a triple
in the sense of Definition \ref{defKKcocycle}.
Using that $\beta_{g g^{-1}}$ is in the center of $\calm(B)$,
we have
\begin{eqnarray*}
&& \mbox{Ad}\, w_g^2 \circ \overline\beta_g \circ \mu(a) \;=\;
\mbox{Ad}\, S w_g^1 \overline\beta_g(S^*) \circ \overline\beta_g \circ \mbox{Ad}\, S \circ \overline{\lambda^1}(a) \\
&=&
S w_g^1 \beta_g S^* \beta_{g^{-1}} \beta_g S \overline{\lambda^1}(a) S^*\beta_{g^{-1}} \beta_g S \beta_{g^{-1}}
{w_g^1}^* S^*
\;=\; \mu(\alpha_g(a)), \mbox{ and}\\
&&\mbox{Ad} \, w_g^2 \circ \overline \beta_g(T V_1 V_1^* T^*)
\; = \; S w_g^1 \beta_g S^* \beta_{g^{-1}} \beta_g T V_1 V_1^* T^* \beta_{g^{-1}}
\beta_g S \beta_{g^{-1}} {w_g^1}^* S^*   \\
&=&  T {T^1}^* w_g^1 \beta_g {T^1} T^* T V_1 V_1^* T^* T {T^1}^* \beta_{g^{-1}} {w_g^1}^* {T^1} T^*\\
&\equiv&  T {T^1}^* \overline\beta_{g g^{-1}}({T^1} V_1 V_1^* {T^1}^*) {T^1} T^*
\;=\; \overline\beta_{g g^{-1}}({T} V_1 V_1^* {T}^*) \quad \mod B \otimes\calk.
\end{eqnarray*}
This proves the first two relations of Definition \ref{defKKcocycle}, and the third one is similar.
The verification of the remaining parts is technically very similar to the last computations and thus we omit 
a further discussion.
\end{proof}

\cite[Lemma 5.2]{thomsen} goes through unchanged. 

\begin{theorem}
In \cite[Theorem 5.3]{thomsen} it is remarked that the map $\Lambda:\mbox{Ext}_{G,t}((A,\alpha),(B,\beta)) \rightarrow \widetilde{KK}^1_G(A,B)$
is an isomorphism.
\end{theorem}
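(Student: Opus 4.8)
Following Thomsen, the plan is to construct an explicit two-sided inverse $\Gamma$ to the homomorphism $\Lambda$ of the previous proposition and to check that every step is compatible with the inverse semigroup bookkeeping. Given a representative $KK^1$-cocycle $(\pi,v,p)$ of a class in $\widetilde{KK}^1_G(A,B)$, one compresses by the projection $p$, putting $\varphi(a):=q_B\big(p\,\pi(a)\,p\big)$ and deriving the twisting cocycle from $v$. First I would verify that the resulting pair is a twisted $G$-extension: the third relation of Definition \ref{defKKcocycle} makes $\varphi$ multiplicative after passage to $Q(B)$, the first relation controls the interaction of $\pi$ with the $G$-action, and the second relation---now carrying the extra term $\overline\beta_{gg^{-1}}(p)$---is exactly what records the compatibility of $p$ with the action modulo $B\otimes\calk$, so that $v$ correctly twists the equivariance of $\varphi$.

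The delicate point is that $v$ is a unitary $\beta$-cocycle only in the modified sense of Definition \ref{defunitarycocycle}: each $v_g$ is a partial isometry with $v_g^*v_g=v_gv_g^*=\beta_{gg^{-1}}$ by (\ref{remarkcocycle}). I would therefore re-examine the defining relations of a twisted extension with these partial isometries in place of Thomsen's unitaries, inserting $\overline\beta_{gg^{-1}}$ on every side of an identity that lacks a $G$-action, and using repeatedly that $\beta_{gg^{-1}}$ is a genuine central projection of $\calm(B)$ to slide it through products until it is absorbed by a neighbouring $\beta_g$ via $\beta_{gg^{-1}}\beta_g=\beta_g$. This is the same mechanism already illustrated in the symmetry computation preceding this proposition and in the well-definedness computation within it.

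It then remains to show $\Gamma\circ\Lambda=\mbox{id}$ and $\Lambda\circ\Gamma=\mbox{id}$. Since \cite[Lemma 5.2]{thomsen} (which goes through unchanged) identifies homotopic twisted extensions in $\mbox{Ext}_{G,t}$, it suffices to produce the unitaries implementing Thomsen's two equivalences. These are precisely Thomsen's unitaries; the only alteration is that in each identity expressing their compatibility with the cocycles one appends $\overline\beta_{gg^{-1}}$ to the action-free side and checks, by the same central-sliding argument, that the expression still closes up modulo $B$.

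The main obstacle is to confirm that the compressed data define a twisted extension \emph{on the nose} rather than merely up to $B\otimes\calk$. Condition 2 of Definition \ref{defKKcocycle} holds only modulo compacts, so the interaction of $p$ with the $G$-action is asymptotic, whereas the cocycle identities of Definition \ref{defunitarycocycle} must hold exactly. Following Thomsen, one absorbs the compact discrepancy into the choice of lift and a homotopy, while relying on the exactness of the central source and range projections $\beta_{gg^{-1}}$. Keeping these two regimes separate---exact centrality of $\beta_{gg^{-1}}$ against approximate equivariance of $p$---is where the inverse semigroup adaptation requires the most care.
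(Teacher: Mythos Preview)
Your approach is essentially the paper's own: send $(\pi,v,p)$ to the twisted extension $(\varphi,v)$ with $\varphi(a)=q_B(\pi(a)p)$---your $q_B(p\pi(a)p)$ agrees since $[p,\pi(a)]\in B$---and the paper simply notes that Thomsen's argument goes through unchanged apart from rechecking one equivariance computation for surjectivity. Your final paragraph's concern is unnecessary: $v$ is already an \emph{exact} $\beta$-cocycle by the very definition of a $KK^1$-cocycle (Definition~\ref{defKKcocycle}), so there is no exact-versus-approximate tension to manage, and the only approximate relation (condition~2) is absorbed by the quotient map $q_B$ when defining $\varphi$.
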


\begin{proof}
The proof goes through unchanged.
We only inspect the surjectivity proof, where from a given $A,B$-$KK^1$-cocycle $(\pi,v,p)$
a $*$-homomorphism $\varphi:A \rightarrow Q(B)$ via $\varphi(a)=q_B(\pi(a)p)$
and then a twisted extension $(\varphi,v) \in \mbox{Ext}_{G,t}(A,B)$ is defined.
Checking that it is a twisted extension, we compute
\begin{eqnarray*}
&& \mbox{Ad}\, q_B(v_g) \circ \widehat \beta_g \circ \varphi(a) \; = \;
q_B(\mbox{Ad}\,v_g  \,\overline \beta_g (\pi(a) p)) \; = \;
q_B(v_g  \,\overline \beta_g (\pi(a)) v_g^* \, \overline\beta_{g g^{-1}} (p))\\
&=& q_B(v_g  \,\overline \beta_g (\pi(a)) v_g^* )\, q_B(p)
\;=\; \varphi(\alpha_g(a))
\end{eqnarray*}
by Definition \ref{defKKcocycle} and $\beta_{g g^{-1}} = v_g v_g^*$ (identities (\ref{remarkcocycle})).
\end{proof}

Also \cite[Theorem 5.5]{thomsen} goes essentially through unchanged and involves only similar verifications
and computations already having been demonstrated. 


\section{Removing the twist}

The aim of this section is the construction 
of a connection between twisted and untwisted extension
groups, see Lemma \ref{lemmaafterlemma62} below.
Here the paper \cite{mingophillips} of Mingo and Phillips is essential 
and its adaption to inverse semigroups is presented in Section \ref{sectionMingoPhillips}
as an appendix.


\begin{lemma}[Cf. Lemma 6.1 of \cite{thomsen}]     \label{lemma61}
We discuss here \cite[Lemma 6.1]{thomsen}, whose statement changes slightly.
%
Obviously, in the statement of \cite[Lemma 6.1]{thomsen} we have to use the
identities
\begin{eqnarray*}
\widehat{\beta_g} \big(q_B(u) \big)\psi_1(a)
&=& \widehat{\beta_{g g^{-1}}} \big(q_b(u) \big) \psi_1(a), \\
\widehat{\beta_g \otimes id_{M_4}}\big(q_{M_4(B)}(U_t) \big) &=& \widehat{\beta_{g g^{-1} \otimes id_{M_4}}} \big (q_{M_4(B)}(U_t) \big)
\end{eqnarray*}
instead of the corresponding identities in Thomsen's paper without the appearances of
the $\beta_{g g^{-1}}$ expressions.
\end{lemma}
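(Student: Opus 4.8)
The plan is to reproduce Thomsen's proof of \cite[Lemma 6.1]{thomsen} step by step, keeping track of every place where the strict $G$-invariance of the constructed unitary $u$ and of the path $U_t$ is invoked, and to check that what the construction actually delivers in our setting are precisely the two displayed identities. Both of these merely assert that the relevant corona elements are invariant in the sense of Definition \ref{defbetainvariance} (relative to $\psi_1(A)$ in the first case, absolutely in the second), so the whole task reduces to tracing the origin of the projection $\beta_{g g^{-1}}$ through Thomsen's construction.

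First I would recall that the objects $u$ and $U_t$ are produced by absorbing a unitary $\beta$-cocycle into the stable algebra via the triviality results of Mingo and Phillips. In our setting these results are replaced by their adaptations for the compatible $\ell^2(G)$-space of Definition \ref{defCompatibleL2}, whose $G$-representation is by partial isometries with common source and range projection $\beta_{g g^{-1}}$ rather than by genuine unitaries; the availability of such a representation is exactly what $E$-continuity (Definition \ref{definitionEcontinuous}) secures. Consequently the absorbed cocycle $v$ contributes a factor $v_g v_g^* = \beta_{g g^{-1}}$ (identities (\ref{remarkcocycle})) at precisely the spot where Thomsen's computation would have produced the identity. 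Because $\beta_{g g^{-1}}$ is central in $\calm(B)$, this extra factor can be collected into the single operator $\overline\beta_{g g^{-1}}$; this is exactly the passage from Thomsen's plain invariance of $q_{M_4(B)}(U_t)$ to the second displayed identity.

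For the first identity the same computation only closes after multiplication by $\psi_1(a)$, since the image of $\psi_1$ lies in the $\beta_{g g^{-1}}$-compatible part of $Q(B)$; multiplying by $\psi_1(a)$ and again exploiting centrality of $\beta_{g g^{-1}}$ yields the stated relative invariance. The main obstacle is not the algebra but confirming that this relative invariance of $u$ together with the absolute invariance of $U_t$ still suffices for the downstream use of the lemma, namely the homotopy connecting twisted and untwisted extensions that culminates in Lemma \ref{lemmaafterlemma62}; this in turn rests on the triviality statements for the compatible $\ell^2(G)$-space which we establish in the appendix, Section \ref{sectionMingoPhillips}.
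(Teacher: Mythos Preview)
Your proposal misidentifies what Thomsen's Lemma 6.1 actually does, and as a result the argument you sketch is not a proof of the statement at hand. The unitary $u$ and the path $U_t$ in \cite[Lemma 6.1]{thomsen} are \emph{not} produced by absorbing a $\beta$-cocycle via the Mingo--Phillips triviality results; that is the content of \cite[Lemma 6.2]{thomsen} (our Lemma \ref{lemma62}). In Thomsen's Lemma 6.1 the construction proceeds instead through an application of Kasparov's technical theorem \cite[Theorem 1.4]{kasparov1988} to a suitable function $\varphi(g)$ built from a partial isometry $S \in M_2(\calm(B))$, yielding positive elements $m,n \in \calm(D)$ with prescribed commutation and invariance properties modulo $D$; the unitary $u$ and the path $U_t$ are then assembled from $m$, $n$ and $S$. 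Nothing in this argument touches $\ell^2(G)$, $\calk_G$, or any cocycle absorption.

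Consequently your invocation of $E$-continuity and of the compatible $\widehat{\ell^2}(G)$-space is misplaced: in the paper these hypotheses are only imposed \emph{after} Lemma \ref{lemma61}, precisely because the lemma does not need them. What the adaptation actually requires is (i) replacing Thomsen's identity \cite[(6.1)]{thomsen} by its $\beta_{gg^{-1}}$-corrected form $\widehat{\beta_g \otimes id_{M_2}}(q_D(S))\psi_1(a) = \widehat{\beta_{gg^{-1}} \otimes id_{M_2}}(q_D(S))\psi_1(a)$, (ii) redefining the function $\varphi$ fed into the technical theorem as $\varphi(g)=\overline{\beta_g \otimes id_{M_2}}(S)-\overline{\beta_{gg^{-1}} \otimes id_{M_2}}(S)$, (iii) using the inverse-semigroup version of Kasparov's technical theorem (e.g.\ \cite[Theorem 5.1]{burgiSemimultiKK}) in place of \cite[Theorem 1.4]{kasparov1988}, and (iv) checking that the resulting $m,n$ satisfy the $\beta_{gg^{-1}}$-corrected invariance relations modulo $D$. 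From these the two displayed identities for $q_B(u)$ and $q_{M_4(B)}(U_t)$ follow by the same functional-calculus manipulations as in Thomsen. Your proposal contains none of these steps.
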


\begin{proof}
In the proof of \cite[Lemma 6.1]{thomsen} the identity
\cite[(6.1)]{thomsen} changes to
$$\widehat{\beta_g \otimes id_{M_2}}\big(q_D(S) \big)\psi_1(a)
= \widehat{\beta_{g g^{-1}} \otimes id_{M_2} } \big(q_D(S) \big) \psi_1(a).$$

The function $\varphi$ appearing in \cite{thomsen} takes the new form $\varphi(g) = \overline{\beta_g \otimes id_{M_2}}(S)
- \overline{\beta_{g g^{-1}} \otimes id_{M_2}}(S)$.
Thomsen then applies Kasparov's technical \cite[Theorem 1.4]{kasparov1988},
for which we use the corresponding and quite similar technical \cite[Theorem 5.1]{burgiSemimultiKK} (the function $\psi$ there has to be ignored).

In item ``2." of the proof of \cite[Lemma 6.1]{thomsen}
we must replace the occurrences $\overline{\beta_g \otimes id_{M_2}}(Y) X
- Y X$
by
$$\overline{\beta_g \otimes id_{M_2}}(Y) X
- \overline{\beta_{g g^{-1}} \otimes id_{M_2}}(Y) X$$
for $Y=S,SS$ and $X= x,xS$.
The application of Kasparov's technical theorem yields positive elements $m,n \in \calm(D)$
satisfying
\begin{eqnarray*}
&& \overline{\beta_g \otimes id_{M_2}}(m) - \overline{\beta_{g g^{-1}} \otimes id_{M_2}} (m), \;
\overline{\beta_g \otimes id_{M_2}}(S) n - \overline{\beta_{g g^{-1}} \otimes id_{M_2}} (S) n,\\
&& n \overline{\beta_g \otimes id_{M_2}}(S) - n \overline{\beta_{g g^{-1}} \otimes id_{M_2}}(S) \;\in \; D
\end{eqnarray*}
rather than the corresponding relations without the appearance
of $\overline{\beta_{g g^{-1}} \otimes id_{M_2}}$ as in Thomsen's paper.
\end{proof}

%

{\em Assume from now on that $G$ is $E$-continuous!}

For the definition of $E$-continuity see Definition \ref{definitionEcontinuous}
or Lemma \ref{lemmaEcontinuity}.
From now on, everywhere where Thomsen uses the $G$-Hilbert space $L^2(G)$ with its right regular representation
we have to use the $G$-Hilbert $C_0(X)$-module $\widehat{\ell^2}(G)$ defined in Definition \ref{defCompatibleL2}
(there we need $E$-continuity). 
For simplicity, we shall thus also switch to the notation $L^2(G):=\widehat{\ell^2}(G)$. 
The actions on $\call(L^2(G))$ and $\calk(L^2(G))$ are the usual induced ones.
For every $G$-Hilbert $B$-module or $G$-algebra $\cale$, $L^2(G) \otimes \cale$ in Thomsen \cite{thomsen}
has to be replaced by the compatible tensor product
$L^2(G) \otimes^X \cale$ (cf. Definition \ref{defL2hilbertmodule}), and likewise $\calk(L^2(G)) \otimes \cale$ in Thomsen \cite{thomsen} by
$\calk(L^2(G)) \otimes^X \cale$ here.
Every occurrence of the Hilbert space $\C$ in \cite{thomsen} has to be substituted by the $G$-Hilbert $C_0(X)$-module $C_0(X)$. This also includes, that we shall substitute the compact operators on a separable Hilbert space, $\calk$,
with its trivial $G$-action
by the $G$-algebra $\calk(C_0(X)^\infty)$ and write the $G$-algebra $\calk \otimes A$
isomorphically as
$\calk \otimes A \cong \calk(C_0(X)^\infty) \otimes^X A$ as $G$-algebras.
(The isomorphism is of course given by $e_{ij} \otimes a \mapsto \theta_{1_{C_0(X)}^{(i)},1_{C_0(X)}^{(j)}} \otimes a$
(position $i$ and $j$).)

Analogously as in Thomsen \cite{thomsen} one defines the $G$-algebra
$$\calk_G  \,:= \, \calk \big(C_0(X)^\infty \big) \otimes^X \calk \big(L^2(G) \big).$$

For convenience of the reader, and since we are using a different $L^2(G)$-space
in our setting which requires higher attention if indeed everything works, we 
shall lay out more details
and larger parts from Thomsen's proof than above 
in the rest of this section.

\begin{lemma}[Cf. Lemma 6.2 of \cite{thomsen}]    \label{lemma62}
Let $[(\varphi,u)] \in \mbox{Ext}_{G,t}(A,B)$ be a twisted extension.
Write $\rho$ for the $G$-action on $\calk_G$. 
Then there is a unitary $w \in  \calm(B {\otimes}^X \calk_G)$
such that
\begin{equation}    \label{comp3}
w (u_g \otimes 1_{\calk_G})(\beta_g \otimes \rho_g) = (\beta_g \otimes \rho_g) w .
\end{equation}
\end{lemma}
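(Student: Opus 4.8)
We have a twisted extension $(\varphi,u)$ with $u$ a unitary $\beta$-cocycle, and we must produce a unitary $w \in \calm(B \otimes^X \calk_G)$ satisfying the cocycle-absorption identity $w(u_g \otimes 1_{\calk_G})(\beta_g \otimes \rho_g) = (\beta_g \otimes \rho_g) w$. Morally, $w$ should implement the statement that after tensoring with the stable algebra $\calk_G$, the twisting cocycle $u_g \otimes 1$ becomes inner, i.e. absorbable into the honest $G$-action $\beta_g \otimes \rho_g$. This is exactly the inverse-semigroup analogue of the Mingo–Phillips triviality phenomenon, and the plan is to manufacture $w$ from the compatible $\ell^2(G)$-structure developed in the appendix (Definition \ref{defCompatibleL2}), which is where $E$-continuity enters.

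**The construction I would use.** First I would rewrite the desired identity in the form $w \circ [(\mbox{Ad}(u_g \otimes 1)) \circ (\beta_g \otimes \rho_g)] = (\beta_g \otimes \rho_g) \circ w$ on the relevant projections, so that $w$ intertwines the twisted action $g \mapsto \mbox{Ad}(u_g) \circ \overline\beta_g$ (the new $G$-action associated to the cocycle $u$, as remarked after Definition \ref{defunitarycocycle}) with the untwisted action $\overline{\beta \otimes \rho}$. I would then build $w$ explicitly out of the regular representation living inside $\calk_G = \calk(C_0(X)^\infty) \otimes^X \calk(L^2(G))$: on the $L^2(G)$-factor the cocycle $u$ acts by a diagonal/multiplication operator $g \mapsto u_g$, and the standard Mingo–Phillips unitary is the one that conjugates this diagonal cocycle into the constant cocycle. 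Concretely I expect $w$ to be given on elementary tensors $b \otimes \delta_h$ by something like $w(b \otimes \delta_h) = \overline\beta_h(u_{h^{-1}})\, b \otimes \delta_h$, or a close variant adapted to the compatible space $\widehat{\ell^2}(G)$; then the cocycle relations in (\ref{remarkcocycle}), namely $\overline\beta_g(u_{g^{-1}}) = u_g^*$ and $u_{gh} = u_g\overline\beta_g(u_h)$, are precisely what force the intertwining identity (\ref{comp3}) to hold.

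**Carrying out the verification.** With such a $w$ in hand I would verify three things in order: (i) that $w$ is well-defined and unitary (adjointable with $w^*$ the analogous formula, using that each $u_g$ is a partial isometry with source and range projection $\beta_{gg^{-1}}$, so on the $h$-th summand $w$ restricts to a partial isometry whose source/range projection matches the compatible-module structure, and these assemble to a unitary because the summands are orthogonal and the projections $\beta_{hh^{-1}}$ tile correctly under $E$-continuity); (ii) that $w$ genuinely lands in $\calm(B\otimes^X\calk_G)$ rather than some larger multiplier algebra, which reduces to a strict-continuity/multiplier check using $\calm(A)\cong\call(A)$; and (iii) the intertwining identity itself, which is a direct computation on elementary tensors. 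For (iii) I would apply $(\beta_g\otimes\rho_g)$ and $w$ in both orders to $b\otimes\delta_h$, use that $\rho_g$ shifts $\delta_h \mapsto \delta_{gh}$ (up to the compatible-module correction), and reconcile the two expressions via the cocycle identity $u_{gh}=u_g\overline\beta_g(u_h)$ together with the centrality of $\beta_{gg^{-1}}$, which is the running technique emphasized in the introduction for moving the idempotents $\alpha_{gg^{-1}}$ around until they are absorbed.

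**Main obstacle.** The hard part will be item (i)–(ii): ensuring $w$ is honestly unitary and a multiplier of $B\otimes^X\calk_G$ in the \emph{compatible} setting. In Thomsen's group case $u_g$ is a genuine unitary and $\rho_g$ shifts the orthonormal basis $\{\delta_h\}$ of $L^2(G)$ bijectively, so $w$ is patently unitary; here $u_g$ is only a partial isometry with projection $\beta_{gg^{-1}}$, the shift $\rho_g$ on $\widehat{\ell^2}(G)$ is not a bijection of basis vectors, and one must check that the various source/range projections $\beta_{hh^{-1}}$ interlock so that $w^*w=ww^*=1$ on the whole compatible module. This is exactly where $E$-continuity of $G$ is indispensable, and where I would lean on the triviality results adapted from Mingo–Phillips in Section \ref{sectionMingoPhillips}; the remaining intertwining computation, once $w$ is known to be a well-defined unitary multiplier, is routine cocycle bookkeeping of the kind already displayed repeatedly in the preceding sections.
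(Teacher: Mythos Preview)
Your plan is correct and is in essence the paper's argument, but the paper organizes it more economically by factoring the ``hard part'' you identify into Lemma~\ref{lemmamingophillips23}. Concretely, the paper equips the Hilbert $B$-module $B^\infty$ with the two $G$-actions $(u_g\beta_g)^\infty$ and $\beta_g^\infty$; these modules are trivially isomorphic as Hilbert $B$-modules (via the identity), so Lemma~\ref{lemmamingophillips23} furnishes a $G$-equivariant unitary $v\in\call_B\big(B^\infty\otimes^X L^2(G)\big)$ intertwining the two induced actions $\mu_g=(u_g\beta_g)^\infty\otimes\tau_g$ and $\nu_g=\beta_g^\infty\otimes\tau_g$. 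One then transports $v$ through the chain of identifications $\call_B\big(B^\infty\otimes^X L^2(G)\big)\cong\calm\big(\calk(B^\infty\otimes^X L^2(G))\big)\cong\calm(B\otimes^X\calk_G)$ to get $w:=\gamma(v)$, and the intertwining relation~(\ref{comp3}) drops out from $v\mu_g=\nu_g v$ in a couple of lines. If you unwind the explicit $V(\varphi_g\otimes\xi)=\varphi_g\otimes g\,\mathrm{id}\,g^{-1}(\xi)$ from the proof of Lemma~\ref{lemmamingophillips23} in this situation, you get exactly your proposed formula (since $\nu'_g\mu'_{g^{-1}}=\overline\beta_g(u_{g^{-1}})\beta_{gg^{-1}}=u_g^*\beta_{gg^{-1}}$ by~(\ref{remarkcocycle})). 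So your steps (i)--(ii), which you correctly flag as the delicate ones, are precisely what Lemma~\ref{lemmamingophillips23} has already packaged; invoking it lets you skip the direct well-definedness and unitarity checks on the compatible module $\widehat{\ell^2}(G)$, and step (iii) then becomes the short computation at the end of the paper's proof.
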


\begin{proof}
Let the $G$-action on $L^2(G)$ (see Definition \ref{defCompatibleL2}) 
be denoted by $\tau$.
We have two $G$-actions on $B^\infty \otimes^X L^2(G)$, namely $\mu_g :=(u_g \beta_g)^\infty \otimes \tau_g$
and $\nu_g:= (\beta_g)^\infty {\otimes} \tau_g$ for $g \in G$. The two corresponding $G$-Hilbert $B$-modules
$B^\infty \otimes^X L^2(G)$
are $G$-equivariantly isomorphic by Lemma \ref{lemmamingophillips23}.
Hence, there exists a unitary $v \in \call_B(B^\infty \otimes^X L^2(G))$ intertwining the aforementioned two $G$-actions.
Consider the isomorphism of Hilbert $B$-modules, see Kasparov \cite[Theorem 2.1]{kasparov1980},
\begin{eqnarray*}     
\gamma:\call_B(L^2(G) \otimes^X B^\infty) \rightarrow \calm \big(\calk \big(L^2(G)
\otimes^X B^\infty \big) \big):
\gamma(T)=(T_1,T_2),
\end{eqnarray*}
where $(T_1,T_2)$ is the double centralizer given by
$T_1(\theta_{x,y})=\theta_{Tx,y}$ and $T_2(\theta_{x,y})=\theta_{x,T^* y}$
for $x,y \in L^2(G) \otimes^X B^\infty$. 
%

Note that $\gamma$ is $G$-equivariant for both actions induced by $\mu$ and $\nu$, respectively, on the domain of $\gamma$,
and their corresponding
induced
actions on the range of $\gamma$, respectively, since
$${(g(T))}_1 (\theta_{x,y}) = \theta_{g T g^{-1} x,y} = \theta_{g T g^{-1} x, g g^{-1} y}
= g \circ T_1 \circ g^{-1} (\theta_{x, y})$$
by compatibility of the inner product (i.e. $g g^{-1} \langle x,y \rangle =
\langle g g^{-1} x,y \rangle$) and the identity
$$g (\theta_{x,y})(z) = g (x \langle y,g^{-1}(z)\rangle)
= \theta_{gx,gy} (z)$$
for all $g \in G$ and $x,y,z \in L^2(G) \otimes^X B^\infty$.
Notice that
\begin{eqnarray*}    
&& \calk(L^2(G) \otimes^X B^\infty) \cong \calk(L^2(G)) \otimes^X \calk(B^\infty) \cong
\calk(L^2(G)) \otimes^X (\calk \otimes B) \\
&\cong& \calk(L^2(G)) \otimes^X \big (\calk(C_0(X)^\infty) \otimes^X B \big)
\cong \calk_G \otimes^X B
\end{eqnarray*}
isomorphically as $G$-algebras by 
P.-Y. Le Gall \cite[Proposition 4.2.(a)]{legall1999}.
We may thus identify the range of $\gamma$ with $\calm(B \otimes^X \calk_G)$.

Set $w:= \gamma(v)$.
Recall that $v \mu_g = \nu_g v$.
Hence $\nu_g v \nu_{g^{-1}} = v \mu_g \nu_{g^{-1}}$.
Since $\mu_g \nu_{g^{-1}} = (u_g \otimes 1) (\beta_{g g^{-1}} \otimes  \rho_{g g^{-1}})$,
we obtain from
$\gamma(\nu_g v \nu_{g^{-1}}) = \gamma(v) \gamma(\mu_g \nu_{g^{-1}})$ that
$$(\beta_g \otimes  \rho_{g}) w (\beta_{g^{-1}} \otimes  \rho_{g^{-1}}) = w (u_g \beta_{g g^{-1}} \otimes  \rho_{g g^{-1}}).$$
Multiplying from right with $\beta_g \otimes  \rho_{g}$, and noting that
$\beta_{g^{-1} g} \otimes  \rho_{g^{-1}g}$ is in the center, yields the claim.
%
%
%
%
%
\end{proof}

\begin{definition}[Cf. \cite{thomsen} after Lemma 6.2]    \label{afterlemma62}
{\rm
For a $G$-algebra $(C,\gamma)$ and a homomorphism $\varphi \in \mbox{Hom}_G(A, Q(B))$
we denote by $\varphi \hat \otimes id_C \in \mbox{Hom}_G(A \otimes^X C, Q(B \otimes^X C))$
the canonical map $\varphi \hat \otimes id_C (a \otimes c)= q_B^{-1}(\varphi(a)) \otimes c + B \otimes^X C$.
%
%
}
\end{definition}

\begin{lemma}[Cf. \cite{thomsen} after Lemma 6.2]   \label{lemmaafterlemma62}
For every twisted extension $[(\varphi,u)] \in \mbox{Ext}_{G,t}(A,B)$
choose a unitary $w \in \calm(B \otimes^X \calk_B)$ according to Lemma \ref{lemma62}. Then there exists a 
homomorphism
$$\Theta: \mbox{Ext}_{G,t}\big((A,\alpha),(B,\beta)\big) \rightarrow \mbox{Ext}_G \big (A
\otimes^X \calk_G, Q(B \otimes^X
\calk_G) \big)$$
defined by $\Theta([\varphi,u])= [\mbox{Ad}\, q_{B \otimes^X \calk_G}(w) \circ (\varphi
\hat \otimes id_{\calk_G})]$.

\end{lemma}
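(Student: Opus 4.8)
The plan is to show that the candidate $\Psi := \mbox{Ad}\, q_{B\otimes^X\calk_G}(w)\circ(\varphi\hat\otimes id_{\calk_G})$ is a genuine, \emph{untwisted} $G$-equivariant $*$-homomorphism $A\otimes^X\calk_G\to Q(B\otimes^X\calk_G)$, and then that the class $[\Psi]$ depends neither on the chosen $w$ nor on the representative of $[(\varphi,u)]$, and is additive. Throughout I abbreviate $\delta:=\beta\otimes\rho$ and $U_g:=u_g\otimes 1_{\calk_G}$. Here $U$ is a unitary $\delta$-cocycle with $U_gU_g^*=\beta_{gg^{-1}}\otimes 1_{\calk_G}$ by the cocycle identities (\ref{remarkcocycle}), and $\varphi\hat\otimes id_{\calk_G}$, read through Definition \ref{afterlemma62} starting from the twisted extension $(\varphi,u)$, is the canonical map which is \emph{twisted}-equivariant for the cocycle $U$, i.e. $\mbox{Ad}\,q(U_g)\circ\widehat\delta_g\circ(\varphi\hat\otimes id_{\calk_G}) = (\varphi\hat\otimes id_{\calk_G})\circ(\alpha\otimes\rho)_g$; conjugation by $q(w)$ is exactly the device that absorbs this twist. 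A preliminary remark I would record is that, because $\delta_{gg^{-1}}=\beta_{gg^{-1}}\otimes\rho_{gg^{-1}}$ is central and $\delta_{gg^{-1}}\delta_{g^{-1}}=\delta_{g^{-1}}$, the map $\overline\delta_g$ is multiplicative, so $\widehat\delta_g$ is multiplicative on the corona algebra and the conjugation computations below are legitimate.

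The crux is the equivariance of $\Psi$. First I would turn the defining relation (\ref{comp3}) of $w$ into multiplier form: composing with $\delta_{g^{-1}}$ on the right gives $\overline\delta_g(w)=\delta_g w\delta_{g^{-1}}=w\,U_g\,\delta_{gg^{-1}}=w\,(u_g\otimes\rho_{gg^{-1}})$, using $u_g\beta_{gg^{-1}}=u_g$ from (\ref{remarkcocycle}). Passing to the corona and splitting off the central factor $1\otimes\rho_{gg^{-1}}$, this reads $\widehat\delta_g(q(w))=q(w)\,q(1\otimes\rho_{gg^{-1}})\,q(U_g)$. Feeding this, together with the twisted equivariance of $\varphi\hat\otimes id_{\calk_G}$, into $\widehat\delta_g(\Psi(x))=\widehat\delta_g(q(w))\,\widehat\delta_g\big((\varphi\hat\otimes id_{\calk_G})(x)\big)\,\widehat\delta_g(q(w))^*$, the two copies of $q(U_g)$ collapse $q(U_g)\widehat\delta_g(\varphi\hat\otimes id_{\calk_G})(x)q(U_g)^*$ to $(\varphi\hat\otimes id_{\calk_G})\big((\alpha\otimes\rho)_g(x)\big)$, leaving $\widehat\delta_g(\Psi(x))=q(w)\,q(1\otimes\rho_{gg^{-1}})\,(\varphi\hat\otimes id_{\calk_G})\big((\alpha\otimes\rho)_g(x)\big)\,q(w)^*$. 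The one genuinely inverse-semigroup point is disposing of the stray central factor $1\otimes\rho_{gg^{-1}}$ that has no cocycle to cancel it: since $(\alpha\otimes\rho)_g(x)$ already lies under the range projection of $\alpha_g\otimes\rho_g$, and $\rho_{gg^{-1}}\rho_g=\rho_g$, this factor is simply absorbed, exactly as the identity $\alpha_{gg^{-1}}\alpha_g=\alpha_g$ in the introduction predicts. One is left with $\widehat\delta_g\circ\Psi=\Psi\circ(\alpha\otimes\rho)_g$, so $\Psi$ is an untwisted $G$-extension representing a class in the target group. I expect this absorption bookkeeping, rather than the algebraic cancellation of $U_g$, to be the main obstacle.

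It remains to check well-definedness and additivity. If $w_1,w_2$ both satisfy (\ref{comp3}), then the computation above gives $\overline\delta_g(w_1w_2^*)=w_1\,\delta_{gg^{-1}}\,w_2^*=\overline\delta_{gg^{-1}}(w_1w_2^*)$ (using $U_g\delta_{gg^{-1}}U_g^*=\delta_{gg^{-1}}$ and centrality), so $q(w_1w_2^*)$ meets point~1 of Definition \ref{defUnitarilyEqu}, while point~2 holds because $w_i$ is unitary and $\mbox{Ad}\,q(w_1w_2^*)\circ\mbox{Ad}\,q(w_2)=\mbox{Ad}\,q(w_1)$; hence the image is independent of $w$. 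A unitary equivalence or a homotopy of twisted extensions $(\varphi,u)\sim(\psi,v)$ is transported, after applying $\hat\otimes id_{\calk_G}$ and conjugating by the respective intertwiners $w$, into a unitary equivalence respectively homotopy of the images, and a degenerate twisted extension is carried to a degenerate, hence trivial, untwisted one; thus $\Theta$ descends to the quotient groups. Finally, additivity follows by realising the sum $(\varphi+\psi,u+v)$ through the $\overline\beta$-invariant isometries $V_1,V_2$ with $V_1V_1^*+V_2V_2^*=1$, choosing the intertwiner for the sum assembled from those for the summands, and invoking weak stability of $B\otimes^X\calk_G$; the $\mbox{Ad}\,q(w)$ conjugation then respects the $K$-theoretic addition, so $\Theta$ is a group homomorphism.
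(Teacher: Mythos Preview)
Your equivariance argument for $\Psi$ is correct and essentially equivalent to the paper's computation (\ref{comp1}); you just work on the corona side whereas the paper stays in the multiplier algebra, and your observation about absorbing the stray $1\otimes\rho_{gg^{-1}}$ via $\rho_{gg^{-1}}\rho_g=\rho_g$ is exactly right. Your check that the class is independent of the chosen $w$ is also fine.

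There is, however, a genuine gap in your treatment of unitary equivalence of twisted extensions. Suppose $(\varphi,u)\cong(\psi,v)$ via a unitary $s\in\calm(B)$ with $v_g\overline\beta_g(s)-su_g\in B$, and let $w,t$ be the intertwiners of Lemma~\ref{lemma62} for $u,v$ respectively. The natural candidate unitary implementing the equivalence between $\Theta(\varphi,u)$ and $\Theta(\psi,v)$ is $t(s\otimes 1)w^*$, and indeed it conjugates one map to the other. But it does \emph{not} satisfy point~1 of Definition~\ref{defUnitarilyEqu}. A direct computation (this is what the paper does just before (\ref{comp5})) gives, in $\calm(B\otimes^X\calk_G)$,
\[
\overline{\delta}_g\big(t(s\otimes 1)w^*\big)-\overline{\delta}_{gg^{-1}}\big(t(s\otimes 1)w^*\big)
= t\,\Big(\big(v_g\overline\beta_g(s)u_g^*-\overline\beta_{gg^{-1}}(s)\big)\otimes\overline\rho_g(1)\Big)\,w^*,
\]
and although $v_g\overline\beta_g(s)u_g^*-\overline\beta_{gg^{-1}}(s)\in B$, the second tensor factor $\overline\rho_g(1)=\rho_{gg^{-1}}$ lies only in $\calm(\calk_G)$, not in $\calk_G$. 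So the difference is not in $B\otimes^X\calk_G$ and does not vanish in the corona. What \emph{is} true is the relative condition: this difference becomes compact after multiplying by anything in the range of $\mbox{Ad}\,q(w)\circ(\varphi\hat\otimes id_{\calk_G})$ --- this is precisely (\ref{comp5}). To pass from this relative invariance to an honest unitary equivalence in $\mbox{Ext}_G$ one needs Lemma~\ref{lemma61} (the adaptation of Thomsen's Lemma~6.1), which produces a genuine equivalence between $\Theta(\varphi,u)+0$ and $0+\Theta(\psi,v)$. Your sentence ``a unitary equivalence \ldots\ is transported \ldots\ into a unitary equivalence of the images'' skips over exactly this point, and without Lemma~\ref{lemma61} the argument does not close.
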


\begin{proof}
%
Recall from Section 5 of \cite{thomsen} that $\mbox{Ext}_{G,t}$ consists of twisted extension divided
by the equivalence relation $\sim$ that two twisted extensions $e_1,e_2$ are equivalent
if adding both with two (possibly different) degenerate twisted extensions, then they are unitarily equivalent,
and finally considering in this quotient space only invertible extensions with respect to addition (direct sum of extensions).
The extension group $\mbox{Ext}_G$ is defined analogously, without the word ``twisted" everywhere.
%

At first we are going to prove that $\Theta(\varphi,u)$ is indeed in $\mbox{Ext}_G$.
For $T \in \calm (B \otimes^X \calk_G)$ we have
\begin{eqnarray}
&& \mbox{Ad}(w) \circ \mbox{Ad}(u_g \otimes 1_{\calk_G}) \circ\overline{\beta_g \otimes \rho_g} \;(T) \nonumber \\
&=& w (u_g \otimes 1_{\calk_G})(\beta_g \otimes \rho_g) T
(\beta_{g^{-1}} \otimes \rho_{g^{-1}}) (u_{g}^* \otimes 1_{\calk_G}) w^* \nonumber  \\
&=& (\beta_g \otimes \rho_g) w T w^* w
 (u_{g^{-1}} \otimes 1_{\calk_G}) (\beta_{g^{-1}} \otimes \rho_{g^{-1}}) w^* \nonumber  \\
&=& (\beta_g \otimes \rho_g) w T w^*
 (\beta_{g^{-1}} \otimes \rho_{g^{-1}})  \nonumber  \\
&=&\overline{\beta_g \otimes \rho_g} \circ  \mbox{Ad}(w)\;(T)   \label{comp1}
\end{eqnarray}
in $\calm(B \otimes^X \calk_G)$
by Lemma \ref{lemma62} and (\ref{remarkcocycle}).
Since $(\varphi,u)$ is a twisted extension, and so $\mbox{Ad}\, q_B(u_g) \circ \widehat \beta_g \circ \varphi(a)
= \varphi(\alpha_g(a))$, see \cite[Section 5]{thomsen}, we get
\begin{eqnarray*}
\mbox{Ad}\, q_{B \otimes^X \calk_G}(u_g \otimes 1_{\calk_G})
\circ \widehat{\beta_g \otimes \rho_g} \circ (\varphi \hat \otimes id_{\calk_G})
&=& (\varphi \hat \otimes id_{\calk_G}) \circ (\alpha_g \otimes \rho_g).
\end{eqnarray*}
Applying $\mbox{Ad} q_{B \otimes^X \calk_G}(w)$ on this identity shows
with identity (\ref{comp1}) that
\begin{eqnarray}
\mbox{Ad} q_{B \otimes^X \calk_G}(w) \circ (\varphi \hat \otimes id_{\calk_G})
&\in& \mbox{Hom}_G(A \otimes^X \calk_G,Q(B \otimes^X \calk_G)),
\label{comp7}
\end{eqnarray}
in other words, this map is $G$-equivariant.
Hence, $\Theta(\varphi,u)$ is in $\mbox{Ext}_G$.

If $(\hat \varphi,u)$ is a degenerate twisted extension, that is,
$\mbox{Ad}(u_g) \circ \overline \beta_g \circ \varphi(a)
= \varphi(\alpha_g(a))$ with $\varphi \in \mbox{Hom}_G(A,\calm(B))$, see \cite[Section 5]{thomsen},
then similar as above we achieve
\begin{eqnarray*}
\mbox{Ad}(w) \circ (\varphi \hat {\otimes} id_{\calk_G})
&\in& \mbox{Hom}_G(A \otimes^X \calk_G,\calm(B \otimes^X \calk_G)).
\end{eqnarray*}
In other words, if $(\hat \varphi,u)$ is degenerate, so is $\Theta(\hat \varphi,u)$.

Our next aim is to show that $\Theta$ respects unitary equivalence.
Assume that two twisted extensions $(\varphi,u)$ and $(\psi,v)$ are unitarily equivalent,
$(\varphi,u) \cong(\psi,v)$, see \cite[Section 5]{thomsen}.
That means, that there exists a unitary $s \in \calm(B)$ such that
\begin{equation}   \label{comp2}
\mbox{Ad} \, q_B(s) \circ \varphi = \psi , \qquad v_g \overline \beta_g(s) - s u_g \in B
\end{equation}
for all $g \in G$. Let $t \in \calm(B \otimes^X \calk_G)$ denote
the unitary $w$ for the twisted extension $(\psi,v)$ from Lemma \ref{lemma62}.
From 
(\ref{comp2}) it follows
\begin{eqnarray}
&& \mbox{Ad} \, q_{B \otimes^X \calk_G} (t (s \otimes 1_{\calk_G}) w^*)
\circ \mbox{Ad} q_{B \otimes^X \calk_G} (w)
\circ (\varphi \hat \otimes id_{\calk_G})    \nonumber \\
&=&
\mbox{Ad} q_{B \otimes^X \calk_G} (t)
\circ (\psi \hat \otimes id_{\calk_G}).    \label{comp6}
\end{eqnarray}
Moreover, computing $\overline{\beta_g \otimes \rho_g}(w^*) = (\beta_{g g^{-1}} \otimes \rho_{g g^{-1}}) (u_g^* \otimes 1)
w^*$
by multiplying (\ref{comp3}) from the right with
${\beta_{g^{-1}} \otimes \rho_{g^{-1}}}$ and taking the adjoint,
and similarly computing $\overline{\beta_g \otimes \rho_g}(t) = t  (v_g \otimes 1)
(\beta_{g g^{-1}} \otimes \rho_{g g^{-1}})$,
we have for all $m \in \calm(B)$ and $k \in \calk_G$
(in $\calm(B \otimes^X \calk_G)$)
\begin{eqnarray*}
&& \Big ( \overline{\beta_g \otimes \rho_g} \big( t (s \otimes 1) w^* \big )
- \overline{\beta_{g g^{-1}} \otimes \rho_{g g^{-1}}} \big ( t (s \otimes 1) w^* \big ) \Big) w (m \otimes k) w^* \\
&=& \Big (
t (v_g \otimes 1)
 \overline{\beta_g \otimes \rho_g} \big (s \otimes 1 \big ) (u_g^* \otimes 1) w^*
-  t \big(\overline \beta_{g g^{-1}}(s) \otimes \overline \rho_{g g^{-1}}(1) \big) w^*  \Big) w (m \otimes k) w^*  \\
&=&
t \Big( \big(v_g
 \overline\beta_g(s) u_g^* m - \overline \beta_{g g^{-1}}(s) m \big ) \otimes \overline \rho_g(1) k \Big ) w^*
\qquad \in \qquad B \otimes^X \calk_G
\end{eqnarray*}
by the fact that $v_g \overline \beta_g(s) u_g^* - \beta_{g g^{-1}}(s) \in B$ by
(\ref{comp2}) and (\ref{remarkcocycle}).

The last computation shows that
\begin{eqnarray}
&& \widehat{\beta_g \otimes \rho_g} \Big( q_{B \otimes^{X} \calk_G} \big (t (s \otimes 1) w^* \big ) \Big )
\, \mbox{Ad} \, q_{B \otimes^X \calk_G}(w) \circ (\varphi \hat \otimes id_{\calk_G})    \nonumber  \\
&=& \widehat{\beta_{g g^{-1}} \otimes \rho_{g g^{-1}}} \Big( q_{B \otimes^X \calk_G} \big (t (s \otimes 1) w^* \big ) \Big )
\, \mbox{Ad} \, q_{B \otimes^X \calk_G}(w) \circ (\varphi \hat \otimes id_{\calk_G}).    \label{comp5}
\end{eqnarray}
An application of Lemma \ref{lemma61} to identities (\ref{comp6}) and (\ref{comp5})
(recall (\ref{comp7}))
shows that $\Theta(\varphi,u) + 0$ (sum operator, cf. \cite[Lemma 3.1]{thomsen}) and $0+\Theta(\psi,v)$
are unitarily equivalent in the sense of Definition \ref{defUnitarilyEqu}.

We leave the verification of the last claim, that $\Theta$ respects direct sums, and so also invertible elements go to
invertible elements, to the reader by verbatim following the corresponding proof
in the last part of \cite[Section 6]{thomsen}.
\end{proof}

\section{Equivariant $KK$-theory and $C^*$-extensions}

\label{section7}

The aim of 
\cite[Section 7]{thomsen}
is the proof of the main result, the identification of
extension groups with Kasparov groups, see Theorems \ref{theorem71}
and \ref{theorem72}. 

Again, in this section we must replace the tensor products $A \otimes \calk_G$ and so on by the balanced tensor
products $A \otimes^X \calk_G$ and so forth. 
For the discussion in \cite[Section 7.2]{thomsen} we remark that $\calk_G$ is equivalent to $C_0(X)$ in $KK^G$.
This was proved in \cite[Proposition 5.14]{burgiBCtriangulated}. (Note that $\calk(L^2(G))$ is equivalent
to $\calk(L^2(G) \oplus C_0(X))$ which is equivalent to $\calk(C_0(X))$ in $KK^G$.)

We begin by restating the main results of Thomsen's paper, slightly adapted to our setting.
These are, of course, also the main results of this note.
Recall from \cite[Section 4]{thomsen} that $KK^1_G(A,B):= KK^G(A, B \oplus B)$, where $A$ and $B$ are trivially graded and $B \oplus B$
is obviously graded by the flip operator.

\begin{theorem}[Cf. Theorem 7.1 of \cite{thomsen}]    \label{theorem71}
Assume that $G$ is a countable, $E$-continuous inverse semigroup.
We may identify the $KK_G^1$-groups with the extension groups of the stabilized $G$-algebras.
More precisely, for all ungraded separable $G$-algebras $A$ and $B$, where $B$ is stable, we have
\begin{eqnarray*}
&& KK_G^1(A,B)
\;\cong \;
\mbox{Ext}_G(A \otimes^X \calk_G, B \otimes^X \calk_G) \; \cong \;
\mbox{Ext}_G^h(A \otimes^X \calk_G, B \otimes^X \calk_G)  \\
&\cong & \mbox{Ext}_{G,t}(A, B)  \; \cong \; \widetilde{KK}_G^1(A,B).
\end{eqnarray*}
\end{theorem}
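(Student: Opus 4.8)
The statement collects four isomorphisms, each of which has essentially been prepared in the preceding sections; the plan is simply to concatenate them, reading the display from right to left. First, $\widetilde{KK}_G^1(A,B)\cong KK_G^1(A,B)$ is the isomorphism induced by the map $\Phi$ of Section 4, the inverse-semigroup version of \cite[Theorem 4.3]{thomsen} already settled in \cite[Theorem 4.4]{burgiUniversalKK}. Next, $\mbox{Ext}_{G,t}(A,B)\cong\widetilde{KK}_G^1(A,B)$ is the isomorphism $\Lambda$ of \cite[Theorem 5.3]{thomsen}, whose required modifications were checked above. The two remaining links, $\mbox{Ext}_{G,t}(A,B)\cong\mbox{Ext}_G(A\otimes^X\calk_G,B\otimes^X\calk_G)$ and $\mbox{Ext}_G(A\otimes^X\calk_G,B\otimes^X\calk_G)\cong\mbox{Ext}_G^h(A\otimes^X\calk_G,B\otimes^X\calk_G)$, carry the genuine content of the theorem and are treated next. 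Chaining all four yields the five groups isomorphic, which is the assertion.

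For the third link I would show that the homomorphism $\Theta$ of Lemma \ref{lemmaafterlemma62} is bijective by factoring it as the $\calk_G$-stabilization $[(\varphi,u)]\mapsto[(\varphi\,\hat\otimes\,id_{\calk_G},\,u\otimes1_{\calk_G})]$ landing in $\mbox{Ext}_{G,t}(A\otimes^X\calk_G,B\otimes^X\calk_G)$, followed by conjugation with the unitary $w$ of Lemma \ref{lemma62}. By identity (\ref{comp1}) this conjugation turns the twisted action $\mbox{Ad}(u_g\otimes1_{\calk_G})\circ\overline{\beta_g\otimes\rho_g}$ into the plain action $\overline{\beta_g\otimes\rho_g}$, hence removes the twist and lands in the untwisted group. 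The stabilization map is an isomorphism because $\calk_G$ is $KK^G$-equivalent to $C_0(X)$ by \cite[Proposition 5.14]{burgiBCtriangulated}, so that tensoring by $\calk_G$ induces an isomorphism on $\mbox{Ext}_{G,t}(A,B)\cong\widetilde{KK}_G^1(A,B)\cong KK_G^1(A,B)$. The twist-removal map is an isomorphism because, after the $\calk_G$-stabilization, Lemma \ref{lemma62} together with the Mingo--Phillips triviality of Lemma \ref{lemmamingophillips23} shows that $w$ implements a unitary equivalence in the sense of Definition \ref{defUnitarilyEqu} between every twisted extension of the stabilized pair and an untwisted one; that $\Theta$ respects the equivalence relation, addition and degeneracy was already verified in the proof of Lemma \ref{lemmaafterlemma62}.

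For the fourth link, Section 3 already provides the surjection $\mbox{Ext}_G(A',B')\to\mbox{Ext}_G^h(A',B')$ for $A'=A\otimes^X\calk_G$, $B'=B\otimes^X\calk_G$. To see it is also injective I would invoke the stability of $B'$: two homotopic extensions become unitarily equivalent after adding suitable degenerate absorbing extensions, exactly as in Thomsen's comparison argument, the only modification being the systematic insertion of the central projections $\overline\beta_{gg^{-1}}$ and $\widehat\beta_{gg^{-1}}$ dictated by Definition \ref{defUnitarilyEqu}. Since $\overline\beta_{gg^{-1}}$ lies in the centre of $\calm(B')$, it can be moved freely through the relevant identities and absorbed against a neighbouring $\beta_g$, so the argument transcribes verbatim.

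The step carrying the real weight, and the only place where the inverse-semigroup hypotheses enter non-formally, is the twist-removal inside $\Theta$. In Thomsen's group setting the cocycle $u$ is literally a unitary representation, trivialized by the regular representation on $L^2(G)$; here $gg^{-1}$ is merely a projection, $u_g$ is only a partial isometry with $u_g^*u_g=\beta_{gg^{-1}}$, and the naive algebra $\calk(\ell^2(G))$ fails to be a $G$-algebra. The whole construction therefore depends on replacing $L^2(G)$ by the compatible module $\widehat{\ell^2}(G)$ of Definition \ref{defCompatibleL2}, which exists precisely because $G$ is $E$-continuous, and on the adapted triviality of Lemma \ref{lemmamingophillips23}. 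Verifying that this compatible $L^2(G)$ genuinely makes Lemma \ref{lemma62}, and hence $\Theta$, work -- that the partial-isometry cocycle $u\otimes1_{\calk_G}$ is really absorbed by $w$ -- is the main obstacle; once it is in place, the remaining links are formal concatenations.
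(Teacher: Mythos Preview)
Your overall architecture matches the paper's: the rightmost two isomorphisms are indeed Theorems 4.3 and 5.3 of \cite{thomsen} (adapted), and the stabilization step uses exactly the $KK^G$-equivalence $\calk_G\sim C_0(X)$ from \cite{burgiBCtriangulated} that the paper cites in Section 7.2. The gap is in your third link.

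You factor $\Theta$ as stabilization followed by twist-removal and argue the latter ``is an isomorphism because $w$ implements a unitary equivalence between every twisted extension of the stabilized pair and an untwisted one''. That sentence only yields \emph{surjectivity} of the forgetful map $\iota:\mbox{Ext}_G(A\otimes^X\calk_G,B\otimes^X\calk_G)\to\mbox{Ext}_{G,t}(A\otimes^X\calk_G,B\otimes^X\calk_G)$. From $\iota\circ\Theta=\text{stab}$ being an isomorphism you get that $\Theta$ is injective and $\iota$ is surjective, but for $\Theta$ to be \emph{surjective} you still need $\iota$ to be \emph{injective}: two untwisted extensions that become equivalent after allowing twisted degenerate summands and twisted unitary equivalences are not obviously equivalent in $\mbox{Ext}_G$. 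Lemma \ref{lemma62} gives you a $w$, but not uniqueness of the resulting untwisted class, so your ``twist-removal map'' is not yet well-defined as an inverse to $\iota$.

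The paper (following Thomsen) closes this by the lemma on the map $\kappa$ after Lemma 7.3: one observes that $\Theta'\circ\iota=\kappa$, where $\Theta'$ is $\Theta$ for the once-stabilized pair and $\kappa$ sends $[\psi]$ to $[\psi\,\hat\otimes\,id_{\calk_G}]$ inside $\mbox{Ext}_G$. The substantial work is proving $\kappa$ injective, which uses \cite[Lemma 7.3]{thomsen} and, crucially, Corollary \ref{corollarymingophillips26} (the Mingo--Phillips equivariant Murray--von Neumann equivalence for full $G$-invariant projections) applied with $\calk_G^+$. Once $\kappa$ is injective, $\iota$ is injective, hence bijective, hence $\Theta$ is bijective. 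Your proposal does not invoke $\kappa$ or Lemma 7.3 at all; the phrase ``Thomsen's comparison argument'' in your fourth link is too vague to stand in for this. Similarly, your fourth link becomes easy \emph{after} $\Theta$ is an isomorphism, by pulling homotopies back to $\mbox{Ext}_{G,t}$ and using \cite[Theorem 5.5]{thomsen}; a direct ``homotopic implies stably unitarily equivalent'' argument in $\mbox{Ext}_G$ is not available without this detour.
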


The last two isomorphisms are a restatement of \cite[Theorem 4.3]{thomsen} and
\cite[Theorem 5.3]{thomsen}. 
Of course, the first two isomorphisms hold also for unstable $B$.
The following theorem just states that the second isomorphism is canonical.

\begin{theorem}[Cf. Theorem 7.2 of \cite{thomsen}]    \label{theorem72}
Assume that $G$ is $E$-continuous.
Let $\varphi,\psi \in \mbox{Hom}_G \big(A \otimes^X \calk_G, Q(B \otimes^X \calk_G) \big)$
be invertible $G$-extensions.
Then $[\varphi]=[\psi]$ in $\mbox{Ext}_G(A \otimes^X \calk_G, B \otimes^X \calk_G)$
if and only if $\varphi$ and $\psi$ are homotopic, that is, there is an invertible
extension $\Phi \in \mbox{Hom}_G \big(A \otimes^X \calk_G, Q( I B \otimes^X \calk_G) \big)$
such that $\widehat{\pi_0} \circ \Phi = \varphi$ and $\widehat{\pi_1} \circ \Phi = \psi$.
\end{theorem}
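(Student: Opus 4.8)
The plan is to follow Thomsen's proof of \cite[Theorem 7.2]{thomsen}, proving the two implications separately while keeping the usual bookkeeping of the central projections $\beta_{g g^{-1}}$. As a guiding observation, recall from \cite[Section 3]{thomsen} that there is a canonical surjective homomorphism $\mbox{Ext}_G \to \mbox{Ext}_G^h$, and that $\mbox{Ext}_G^h$ is by definition the quotient in which exactly the homotopic extensions are identified. Theorem \ref{theorem71} supplies compatible isomorphisms of both $\mbox{Ext}_G$ and $\mbox{Ext}_G^h$ with $KK^1_G(A,B)$. Thus the content of Theorem \ref{theorem72} is the injectivity of this canonical map, unwound into the explicit homotopy statement involving $\Phi$, $\widehat{\pi_0}$, and $\widehat{\pi_1}$.

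For the ``only if'' direction I would assume $[\varphi]=[\psi]$ in $\mbox{Ext}_G(A \otimes^X \calk_G, B \otimes^X \calk_G)$, which by the definition recalled in the proof of Lemma \ref{lemmaafterlemma62} means that, after adding suitable degenerate $G$-extensions, $\varphi$ and $\psi$ become unitarily equivalent. First I would show that a degenerate $G$-extension is null-homotopic through the standard path, so that adding degenerates does not change the homotopy class. Then I would show that unitary equivalence in the sense of Definition \ref{defUnitarilyEqu} implies homotopy: after the stabilization by $\calk_G$, the implementing unitary is connected to the identity by the $G$-equivariant rotation path built from the $G$-invariant isometries $V_1, V_2$, arranged so that the modified invariance $\widehat\beta_g(q_{B \otimes^X \calk_G}(u_t)) = \widehat\beta_{g g^{-1}}(q_{B \otimes^X \calk_G}(u_t))$ of point~1 of Definition \ref{defUnitarilyEqu} holds for every $t$. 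Composing the two homotopies produces the required $\Phi \in \mbox{Hom}_G(A \otimes^X \calk_G, Q(IB \otimes^X \calk_G))$ with $\widehat{\pi_0} \circ \Phi = \varphi$ and $\widehat{\pi_1} \circ \Phi = \psi$.

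For the ``if'' direction I would take a homotopy $\Phi$ as in the statement and invoke the isomorphism $\mbox{Ext}_G \cong KK^1_G(A,B)$ of Theorem \ref{theorem71}, which is natural in $B$, together with the homotopy invariance of $KK^G$ built into Definition \ref{defCycle} (homotopy induced by $\E^G(A, B[0,1])$): evaluating $\Phi$ at the two endpoints shows that $\varphi$ and $\psi$ carry the same $KK^1_G$-class and hence the same class in $\mbox{Ext}_G$. The main obstacle is the $G$-equivariant rotation homotopy in the ``only if'' part. Since in our setting the implementing datum is only a $\beta$-cocycle in the sense of Definition \ref{defunitarycocycle}, namely a family of partial isometries with common source and range projection $\beta_{g g^{-1}}$ rather than genuine unitaries, the rotation path must be set up to preserve this source/range projection and the modified invariance at every $t$, and one must check that the interval algebra $IB \otimes^X \calk_G$, formed with the compatible $\ell^2(G)$-space, carries the $G$-action compatibly. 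Once these $G$-action points are settled, all remaining verifications are Thomsen's, with the routine insertion of the central factors $\beta_{g g^{-1}}$ via the identities (\ref{remarkcocycle}).
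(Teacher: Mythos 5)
Your ``only if'' half (equality in $\mbox{Ext}_G$ implies homotopy) is the routine direction, and your outline of it — adding degenerates does not change the homotopy class, and unitary equivalence is converted into a homotopy by a rotation path of $G$-invariant isometries respecting the $\widehat{\beta}_{g g^{-1}}$-modified invariance — is consistent with the Section 3 machinery the paper adapts. The genuine gap is in the ``if'' direction, and it is a circularity rather than a mere omission. You justify that direction by invoking the isomorphisms of Theorem \ref{theorem71} as being ``compatible'' with the canonical surjection $\mbox{Ext}_G \to \mbox{Ext}_G^h$, respectively as being ``natural in $B$'' with respect to the evaluations $\pi_0,\pi_1: IB \to B$. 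But Theorem \ref{theorem71} asserts only abstract group isomorphisms; the paper states explicitly, right after it, that Theorem \ref{theorem72} ``just states that the second isomorphism is canonical''. In other words, the compatibility you take as an input is precisely the content of the theorem to be proven. Nor is naturality in $B$ available for free: the isomorphism $\mbox{Ext}_G(A\otimes^X\calk_G,B\otimes^X\calk_G)\cong KK^1_G(A,B)$ is a composite through $\mbox{Ext}_{G,t}(A,B)$ built from non-canonical choices — the Mingo--Phillips-type unitary $w$ of Lemma \ref{lemma62} entering $\Theta$ of Lemma \ref{lemmaafterlemma62}, the choices of inverses, degenerate twisted extensions and implementing unitaries $T$ in the construction of $\Lambda$, the lifts $\overline\lambda$, the isometries $V_1,V_2$ — and verifying that all of these can be made compatibly with $\pi_0$ and $\pi_1$ is essentially the whole difficulty of the theorem, left unaddressed in your proposal.

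The paper's actual proof (Thomsen's Sections 7.1--7.3 with the stated modifications) avoids any naturality claim. For the hard direction one stabilizes the homotopy $\Phi$ by $\hat\otimes\, id_{\calk_G}$, untwists it with the Section 6 machinery into a homotopy of twisted extensions, and applies the rigidity of $\mbox{Ext}_{G,t}$ (two elements are identical iff their representatives are homotopic, \cite[Theorem 5.5]{thomsen}, imported in Section 5); this gives $\kappa[\varphi]=\kappa[\psi]$ for the double-stabilization map $\kappa$ of the final lemma of Section \ref{section7}. The injectivity of $\kappa$ — whose proof occupies Thomsen's Section 7.3 and requires Lemma \ref{lemma61}, the algebra $\calk_G^+$, the $G$-invariant isometries $V,W$ supplied by Corollary \ref{corollarymingophillips26}, and (for the surrounding discussion) the $KK^G$-equivalence of $\calk_G$ with $C_0(X)$ — then yields $[\varphi]=[\psi]$. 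None of these ingredients appears in your proposal, so as written it does not prove the theorem; to repair it you would either have to prove the naturality you assume (which amounts to redoing exactly this work) or follow the $\kappa$-injectivity route.
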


\begin{lemma} 
\cite[Lemma 7.3]{thomsen} holds verbatim. (Recall that an operator $u \in \calm((A,\alpha))$
is {\em $\overline \alpha$-invariant}
if $\alpha_g \circ u = u \circ \alpha_g$ for all $g \in G$.)
\end{lemma}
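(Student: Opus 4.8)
The plan is to confirm that Thomsen's argument for \cite[Lemma 7.3]{thomsen} transfers without any of the modifications catalogued in the introduction; that is, no factor $\alpha_{g g^{-1}}$ need be inserted anywhere. The reason is structural: the entire lemma is phrased in terms of $\overline\alpha$-invariant operators, and $\overline\alpha$-invariance is exactly the relation $\alpha_g \circ u = u \circ \alpha_g$, which is formally the same commutation Thomsen exploits in the group case.

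First I would record the elementary consequence that, for an $\overline\alpha$-invariant $u \in \calm((A,\alpha)) \cong \call(A)$, applying the commutation relation inside $\overline\alpha_g(u) = \alpha_g \circ u \circ \alpha_{g^{-1}}$ gives $\overline\alpha_g(u) = u \circ \alpha_{g g^{-1}} = u\,\overline\alpha_g(1)$, so that the $\overline\alpha$-invariance of Definition \ref{defbetainvariance} holds automatically. The crucial point is that $\alpha_g u = u \alpha_g$ already carries the projection $\alpha_{g g^{-1}}$ on both sides, so at no step does an action $\alpha_g$ appear on one side of an identity without a matching action; this is precisely the situation in which the introduction's first modification would otherwise be forced.

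Second, wherever Thomsen's computation composes the action with its inverse and tacitly uses $g^{-1} g = 1$, I would instead leave the resulting central projection $\alpha_{g g^{-1}}$ in place, using that it is central in $\calm(A)$ (recorded in Section \ref{section2}) and that $\alpha_{g g^{-1}} u = u \alpha_{g g^{-1}}$, to slide it to a neighbouring $\alpha_g$ where it is absorbed via $\alpha_{g g^{-1}} \alpha_g = \alpha_g$. Because the invariance relation is two-sided, the projection appears symmetrically and cancels, leaving literally the identity Thomsen writes. The only thing genuinely to check — and the mild obstacle — is that nowhere does the proof use full invertibility $g g^{-1} = 1$ to \emph{delete} an unpaired action rather than merely commute one past an operator; scanning the steps, each use of the action sits inside the invariance relation or inside a composition $\alpha_g(\cdots)\alpha_{g^{-1}}$ producing a central $\alpha_{g g^{-1}}$, and in both cases the argument is insensitive to whether $g g^{-1}$ equals $1$ or a proper projection. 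Hence the statement and proof hold verbatim.
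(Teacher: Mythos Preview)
Your proposal contains a genuine gap. You claim that Thomsen's proof transfers ``without any of the modifications catalogued in the introduction; that is, no factor $\alpha_{gg^{-1}}$ need be inserted anywhere,'' but this is not so. In Thomsen's group setting, two intermediate identities in the proof read
\[
\widehat{\beta_g \otimes id_{M_2}}\big(M\big)\,x \;=\; M\,x,
\]
for certain $M \in M_2(\calm(B))$ built from $\overline\psi(U)$ (respectively from a unitary $W$) and certain $x$. In the inverse semigroup setting these identities are simply false as written and must be replaced by
\[
\widehat{\beta_g \otimes id_{M_2}}\big(M\big)\,x \;=\; \widehat{\beta_{gg^{-1}} \otimes id_{M_2}}\big(M\big)\,x.
\]
Indeed, your own first paragraph derives $\overline\alpha_g(u) = u\,\overline\alpha_g(1)$, \emph{not} $\overline\alpha_g(u)=u$; the central projection does not cancel to leave Thomsen's literal line, it remains on the right-hand side. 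Your assertion that ``at no step does an action $\alpha_g$ appear on one side of an identity without a matching action'' is precisely what fails at these two lines.

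This is not a cosmetic issue: the two identities above are exactly the hypotheses that feed into \cite[Lemma 6.1]{thomsen}, which is the engine of the argument. In the present setting one must invoke the modified version, Lemma~\ref{lemma61}, whose hypotheses themselves carry the $\widehat{\beta_{gg^{-1}}}$ factors. So the \emph{statement} of \cite[Lemma 7.3]{thomsen} holds verbatim, but its proof requires both the two line changes and the substitution of Lemma~\ref{lemma61} for Thomsen's Lemma~6.1. Your proposed justification that the proof itself is unchanged is incorrect.
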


\begin{proof}
In \cite[Lemma 7.3]{thomsen} we have to alter two lines in its proof. We have
to take
\begin{eqnarray*}
\widehat{\beta_g \otimes id_{M_2}}  \left ( \left (
\begin{matrix}
\overline \psi (U) & 0 \\
0 & \overline \psi (U^*)
\end{matrix}
\right ) \right ) x
&=&
\widehat{\beta_{g g^{-1}} \otimes id_{M_2}}  \left ( \left (
\begin{matrix}
\overline \psi (U) & 0 \\
0 & \overline \psi (U^*)
\end{matrix}
\right ) \right ) x
\end{eqnarray*}
and
\begin{eqnarray*}
\widehat{\beta_g \otimes id_{M_2}}  \big(q_{B \otimes M_2}(W) \big) \left (
\begin{matrix}
\overline \psi (a) & 0 \\
0 &  0
\end{matrix}
\right )
&=&
\widehat{\beta_{g g{-1}} \otimes id_{M_2}}  \big(q_{B \otimes M_2}(W) \big) \left (
\begin{matrix}
\overline \psi (a) & 0 \\
0 & 0
\end{matrix}
\right )
\end{eqnarray*}
instead of the corresponding identities without appearance of $\widehat{\beta_{g g{-1}} \otimes id_{M_2}}$
in Thomsen's paper.
Of course, instead of \cite[Lemma 6.1]{thomsen} we have to apply Lemma \ref{lemma61}
in the proof.
\end{proof}

Most of the remainder of Section 7 of Thomsen's paper after \cite[Lemma 7.3]{thomsen}
is dedicated to the following lemma stated at the beginning of \cite[Section 7.3]{thomsen}.

\begin{lemma}
There exists a well-defined, injective map
\begin{eqnarray*}
&& \kappa:\mbox{Hom}_G \big(A \otimes^X \calk_G, Q(B \otimes^X \calk_G) \big)/\sim  \\
&&\quad \rightarrow \quad
\mbox{Hom}_G \big (A \otimes^X \calk_G \otimes^X \calk_G, Q(B \otimes^X \calk_G \otimes^X \calk_G) \big)/\sim
\end{eqnarray*}
given by $[\psi] \mapsto [\psi \hat \otimes id_{\calk_G}]$,
where $\sim$ denotes the equivalence relation on $\mbox{Hom}_G(A,Q(B))$ whose
quotient defines the set of invertible elements called $\mbox{Ext}_G(A,B)$.
\end{lemma}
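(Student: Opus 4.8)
The plan is to follow Thomsen's treatment of the corresponding statement in \cite[Section 7.3]{thomsen} essentially verbatim, performing throughout the three standard modifications of this note: replacing unitary cocycles by $\beta$-cocycles in the sense of Definition \ref{defunitarycocycle}, inserting the central factor $\overline\beta_{g g^{-1}}$ on whichever side of an equivariance identity lacks a $\beta_g$ that the other side carries, and substituting every ordinary tensor product by the compatible product $\otimes^X$ built on the compatible $\ell^2(G)$-space (Definition \ref{defCompatibleL2}), which requires $E$-continuity. Concretely the argument falls into two parts, the well-definedness of $\kappa$ and its injectivity.

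For well-definedness I would first verify that $\psi \hat\otimes id_{\calk_G}$ is again $G$-equivariant. This is precisely the computation recorded in (\ref{comp7}) during the proof of Lemma \ref{lemmaafterlemma62}, now read with $A \otimes^X \calk_G$ and $B \otimes^X \calk_G$ in place of $A$ and $B$; it uses only that $\psi$ is $G$-equivariant together with Definition \ref{afterlemma62}. Next one checks that $\kappa$ descends to the quotient by $\sim$: a degenerate extension is sent to a degenerate one (the degenerate case treated in Lemma \ref{lemmaafterlemma62}); unitarily equivalent extensions, in the sense of Definition \ref{defUnitarilyEqu}, are sent to unitarily equivalent ones, where Lemma \ref{lemma61} is invoked to absorb the $\beta$-cocycle produced by $\hat\otimes id_{\calk_G}$, exactly as in the passage culminating in (\ref{comp5}); and direct sums are respected, so invertible elements map to invertible elements. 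Each of these is the verbatim analogue of a verification already carried out for $\Theta$, the only change being the extra $\overline\beta_{g g^{-1}}$ terms, which migrate freely since $\beta_{g g^{-1}}$ is central in $\calm(B)$ and hence $\beta_{g g^{-1}} \otimes \rho_{g g^{-1}}$ is central in $\calm(B \otimes^X \calk_G)$.

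For injectivity the decisive ingredient is a $G$-equivariant isomorphism $\calk_G \otimes^X \calk_G \cong \calk_G$, implemented at the level of the compatible $\ell^2(G)$-module by the absorption results of the appendix (Section \ref{sectionMingoPhillips}); this rests on $E$-continuity and is the inverse-semigroup replacement for Thomsen's use of $L^2(G) \otimes L^2(G) \cong L^2(G)$ (and is consistent with $\calk_G$ being equivalent to $C_0(X)$ in $KK^G$, \cite[Proposition 5.14]{burgiBCtriangulated}). Transporting the codomain of $\kappa$ back along this isomorphism, the map $\psi \mapsto \psi \hat\otimes id_{\calk_G}$ becomes conjugation of $\psi$ by the intertwiner coming from the isomorphism; since that intertwiner is absorbed into $\sim$, the induced self-map is the identity on $\sim$-classes, so that $\psi_1 \hat\otimes id_{\calk_G} \sim \psi_2 \hat\otimes id_{\calk_G}$ forces $\psi_1 \sim \psi_2$, giving injectivity. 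The point where the inverse-semigroup setting genuinely intervenes is that the isomorphism $\calk_G \otimes^X \calk_G \cong \calk_G$ need not be strictly $G$-invariant; it intertwines the two actions only up to a $\beta$-cocycle in the sense of Definition \ref{defunitarycocycle}, which must be carried through and absorbed using centrality of $\beta_{g g^{-1}}$, precisely as in Lemmas \ref{lemma62} and \ref{lemmaafterlemma62}.

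The main obstacle I anticipate is not the bookkeeping of the $\overline\beta_{g g^{-1}}$ factors, which is by now routine, but securing the $G$-equivariant absorption isomorphism for the compatible tensor products and verifying that the resulting intertwiner is a genuine $\beta$-cocycle, namely that its source and range projections both equal $\beta_{g g^{-1}} = u_g^* u_g = u_g u_g^*$. This is exactly where the compatibility of the $\ell^2(G)$-module and the $E$-continuity of $G$ are indispensable, since without them $\calk(L^2(G))$ would fail to be a $G$-algebra and the whole conjugation argument would collapse. Once this isomorphism and its cocycle are in hand, the remaining manipulations mirror Thomsen's, and centrality of $\beta_{g g^{-1}}$ lets every stray projection move to an adjacent $\beta_g$ and be absorbed through $\beta_{g g^{-1}} \beta_g = \beta_g$, closing the argument.
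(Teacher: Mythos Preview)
Your treatment of well-definedness is essentially the paper's: $\kappa$ is defined just like $\Theta$ in Lemma \ref{lemmaafterlemma62} (with trivial cocycle), and the same application of Lemma \ref{lemma61} shows that it respects degeneracy, unitary equivalence and sums. That part is fine.

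The injectivity argument, however, diverges from the paper and contains a gap. You propose to use a $G$-equivariant isomorphism $\calk_G \otimes^X \calk_G \cong \calk_G$ and transport $\psi \hat\otimes id_{\calk_G}$ back along it, asserting that the result lands on $[\psi]$. But transporting along such an isomorphism produces \emph{some} class in the original $\mbox{Ext}_G$-group, not a priori $[\psi]$; the claim that ``the induced self-map is the identity on $\sim$-classes'' is precisely what needs proof, and your outline does not supply it. The paper (following Thomsen) does \emph{not} proceed this way. Instead it introduces the auxiliary $G$-algebra
\[
\calk_G^+ \;=\; \calk\big(C_0(X)^\infty\big)\otimes^X \calk\big(C_0(X)\oplus L^2(G)\big),
\]
considers the analogous map $\kappa^+:[\psi]\mapsto[\psi\hat\otimes id_{\calk_G^+}]$, and exploits the $G$-invariant corner projection $e=e_{11}\otimes e_{11}\in\calk_G^+$ coming from the extra $C_0(X)$-summand. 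Corollary \ref{corollarymingophillips26} then furnishes a \emph{$G$-invariant} isometry $V\in\calm(B\otimes^X\calk_G\otimes^X\calk_G^+)$ with $VV^*=1\otimes e$, and compressing with $V$ recovers $\psi$ from $\psi\hat\otimes id_{\calk_G^+}$ up to $\sim$; a second $G$-invariant isometry $W$ on the $A$-side, again from Corollary \ref{corollarymingophillips26}, then transfers injectivity from $\kappa^+$ to $\kappa$. Note in particular that these isometries are genuinely $G$-invariant in the sense of Definition \ref{defGinvariantoperator}; your anticipated difficulty about carrying along a $\beta$-cocycle in the intertwiner does not arise, and the whole injectivity step is untwisted. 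What you are missing is the $\calk_G^+$/corner-projection mechanism that provides an honest left inverse at the level of $\sim$-classes.
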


\begin{proof}
As in Thomsen \cite{thomsen} we remark
that $\kappa$ is similarly defined like $\Theta$ of Lemma \ref{afterlemma62} and so the proof
that $\kappa$ is well-defined is similar as the corresponding proof there by an application of Lemma \ref{lemma61}.

After \cite[Lemma 7.3]{thomsen} Thomsen considers the $G$-equivariant Hilbert space $\C \oplus L^2(G)$
which in our case turns to the $G$-Hilbert $C_0(X)$-module
$C_0(X) \oplus L^2(G)$ with diagonal $G$-action $\tau_g^+(\lambda,\psi) := g(\lambda) \oplus g(\psi)$
for all $g \in G, \lambda \in C_0(X)$ and $\psi \in L^2(G)$.
This induces a canonical $G$-action 
on the $C^*$-algebra 
$$\calk_G^+  \,:= \, \calk \big(C_0(X)^\infty \big) \otimes^X \calk \big(C_0(X) \oplus L^2(G) \big).$$
We proceed then as in Thomsen's paper after \cite[Lemma 7.3]{thomsen}. Thomsen considers
a similar map, $\kappa^+$ say, which replaces the new copies of $\calk_G$ in the image
of $\kappa$ by $\calk_G^+$ and maps $[\psi]$ to $[\psi \hat \otimes \calk_G^+]$.
A $G$-invariant projection $e \in \calk_G^+$ is chosen for which we take $e:=e_{11} \otimes e_{11} \in \calk_G^+$  
where $e_{11}=\theta_{1_{C_0(X)},1_{C_0(X)}} \in \calk(C_0(X) \oplus L^2(G)), \calk(C_0(X)^\infty)$ denotes the corner
projection.
%
By Corollary \ref{corollarymingophillips26} we may choose a $G$-invariant isometry $V \in \calm(B \otimes^X \calk_G
\otimes^X \calk_G^+)$ such that $V V^* = 1_{\calm(B \otimes^X \calk_G)} \otimes e$.
We can then literally proceed as in Thomsen's paper to show that $\kappa^+$ is injective.
Also the remaining proof where it is shown that $\kappa$ is injective
goes verbatim through in our setting (with only obvious adaption as taking $A \otimes^X \calk_G$ rather than $A \otimes \calk_G$).
A certain $G$-invariant isometry $W \in \calm(A \otimes^X \calk_G \otimes^X \calk_G^+)$
with range projection $1_{\calm(A \otimes^X \calk_G \otimes^X \calk_G)}$
is then again chosen by Corollary \ref{corollarymingophillips26}.
\end{proof}


\section{Adaption of a paper of Mingo and Phillips}

\label{sectionMingoPhillips}

In this section we introduce the {\em compatible} $\ell^2(G)$-space $\widehat \ell^2(G)$ for an inverse semigroup
$G$
which replaces the corresponding classical space for groups with its regular representation.
This new notion is necessary for $\widehat \ell^2(G)$ to become a $G$-Hilbert $C_0(X)$-module, and so particularly
$\call(\ell^2(G))$ a $G$-algebra. The classical $\ell^2(G)$-space would not become a $G$-Hilbert $\C$-module.
Moreover, we restate some central results from the paper of Mingo and Phillips \cite{mingophillips}
for this new $\ell^2$-space. All from this section is actually taken from \cite{burgiBCtriangulated}.

\begin{definition}   \label{defC0X}
{\rm
Define $X$ to be the locally compact Hausdorff space and Gelfand spectrum of the commutative
$C^*$-algebra $C^*(E)$ which is freely generated by the commuting, self-adjoint projections $e \in E$.
That is, $C_0(X) \cong C^*(E)$ via $1_e \leftrightarrow e$ for all $e \in E$
(so we identify in our notation the projection
$e$ with its carrier set in $X$).
We turn this $C^*$-algebra to a $G$-algebra under the $G$-action
$g(1_e) := 1_{g e g^{-1}}$ for all $g \in G$ and $e \in E$.


}
\end{definition}

In the next few paragraphs (until Lemma \ref{lemmalinindepen}) 
we shall identify elements $e \in E$ with its characteristic function $1_e$ in $C_0(X)$.
Write $\mbox{Alg}^*(E)$ for the dense $*$-subalgebra of $C_0(X)$ generated by the characteristic functions
$1_e$ for all $e \in E$.
Moreover,
write $\bigvee_i f_i :X \rightarrow \C$ for the pointwise supremum of a family of functions $f_i: X \rightarrow \C$.
We shall use the well-known order relation on $G$ defined by $g \le h$ iff $g = eh$ for some $e \in E$.

\begin{definition}   \label{definitionEcontinuous}
{\rm
An inverse semigroup $G$ is called {\em $E$-continuous} if
the function $\bigvee \{ e \in E |\, e \le g\} \in \C^X$ (in precise notation: $\bigvee \{1_e \in C_0(X) |\,e\in E,\, e \le g\} \in \C^X$) 
is a {\em continuous} function in $C_0(X)$
for all $g \in G$.
}
\end{definition}

\begin{lemma}   \label{lemmaEcontinuity}
An inverse semigroup $G$ is {$E$-continuous} if and only if for every $g \in G$ there exists a finite
subset $F \subseteq E$ such that
$\bigvee \{e \in E |\, e \le g\} =\bigvee \{e \in F |\, e \le g\}$.
\end{lemma}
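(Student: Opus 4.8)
The plan is to deduce both implications from a single structural fact: a projection in $C_0(X)$ has compact open carrier set. For each $g \in G$ I would set $f_g := \bigvee\{1_e \mid e \in E,\, e \le g\} \in \C^X$. Since every generator $1_e$ is $\{0,1\}$-valued, so is the pointwise supremum $f_g$, and its carrier $\{x : f_g(x)=1\}$ is exactly $S_g := \bigcup_{e \le g} U_e$, where $U_e \subseteq X$ denotes the clopen carrier set of $1_e$ (the subset identified with $e$ in Definition \ref{defC0X}). With this notation, $E$-continuity of $G$ says precisely that each $f_g$ is a (necessarily idempotent) element of $C_0(X)$.

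For the direction ($\Leftarrow$) I would assume a finite $F \subseteq E$ with $f_g = \bigvee\{1_e \mid e \in F,\, e \le g\}$. Then $f_g$ is a finite supremum of the projections $1_e \in C_0(X)$, and since $C_0(X)$ is commutative the join of finitely many commuting projections, built up inductively via $p \vee q = p + q - pq$, again lies in $C_0(X)$. Thus $f_g \in C_0(X)$ for every $g$, which is exactly $E$-continuity.

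For ($\Rightarrow$) I would assume $G$ is $E$-continuous, so each $f_g \in C_0(X)$. First I would record the structural fact: if $p \in C_0(X)$ is a projection then $p$ is $\{0,1\}$-valued, so its carrier $p^{-1}(\{1\}) = p^{-1}((1/2,\infty))$ is open, while the vanishing-at-infinity condition makes $p^{-1}(\{1\}) = \{x : |p(x)| \ge 1/2\}$ compact; hence the carrier is compact open. Applying this to $p = f_g$ shows $S_g$ is compact. Then $\{U_e\}_{e \le g}$ is an open cover of the compact set $S_g = \bigcup_{e \le g} U_e$, so a finite subcover gives $S_g = U_{e_1} \cup \cdots \cup U_{e_n}$ with each $e_i \le g$; setting $F := \{e_1, \dots, e_n\}$ yields $f_g = 1_{e_1} \vee \cdots \vee 1_{e_n} = \bigvee\{1_e \mid e \in F,\, e \le g\}$, as needed.

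The main obstacle --- indeed the only step that is not bookkeeping --- will be the passage from the potentially infinite supremum defining $f_g$ to a finite one in the forward direction. The decisive point is that continuity of $f_g$ inside the locally compact Hausdorff space $X$ forces its carrier $S_g$ to be compact, after which a finite-subcover argument immediately produces $F$. The remaining claims (that $1_e$ and $f_g$ are $\{0,1\}$-valued with the asserted carriers, and that finite joins of projections stay inside $C_0(X)$) are routine facts about commutative $C^*$-algebras.
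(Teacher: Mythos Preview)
Your proof is correct and follows essentially the same approach as the paper's: the key forward direction is obtained by observing that $f_g \in C_0(X)$ is a projection with compact carrier $S_g$, and then extracting a finite subcover from the open cover $\{U_e\}_{e \le g}$. The paper's proof is merely more terse, omitting the trivial backward direction and the routine verification that projections in $C_0(X)$ have compact carrier.
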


\begin{proof}
If $\bigvee \{e \in E |\, e \le g\} = 1_K \in C_0(X)$ for a clopen subset $K \subseteq X$ then $K$ must be compact. Hence $K = \bigcup \{\,\mbox{carrier}(1_e) \subseteq X\,| \, e \in E ,\, e \le g\}$ allows a finite subcovering.
\end{proof}

\begin{definition}[Compatible $L^2(G)$-space]     \label{defCompatibleL2}
{\rm
Let $G$ be an $E$-continuous inverse semigroup.
Write $c$ for the linear span of all functions $\varphi_g: G \rightarrow \C$ (in the linear space $\C^G$) defined by
\begin{eqnarray*}
\varphi_g(t) & :=& 1_{\{t \le g\}} 
\end{eqnarray*}
(characteristic function)
for all $g,t \in G$.
Endow $c$ with the $G$-action $g(\varphi_h) := \varphi_{g h}$ for all $g,h \in G$.
Turn $c$ to an $\mbox{Alg}^*(E)$-module by setting $\xi e := e(\xi)$ for all $\xi \in c$ and $e \in E$.
Define an $\mbox{Alg}^*(E)$-valued inner product on $c$ by
\begin{eqnarray}  \label{identinnerprodphi}
\langle \varphi_g, \varphi_h \rangle &:=& \bigvee \{e \in E \,|\, eg = e h ,
\,e \le g g^{-1} h h^{-1} \}.
\end{eqnarray}
The norm completion of $c$ is a 
$G$-Hilbert $C_0(X)$-module denoted by $\widehat{\ell^2}(G)$.
}
\end{definition}

We discuss the last definition.
At first notice that
$\langle \varphi_g , \varphi_h\rangle = g g^{-1} \bigvee \{e \in E|\,e = e h g^{-1}\}$
(observe that $e = e h g^{-1}$ implies $e \le h g^{-1} g h^{-1}$),
so that by $E$-continuity $\langle \varphi_g , \varphi_h\rangle$ is in $C_0(X)$ and actually even in $\mbox{Alg}^*(E)$ by
Lemma \ref{lemmaEcontinuity}, and $e \in E$ in (\ref{identinnerprodphi}) can be replaced by $e \in F$
for some finite subset $F \subseteq E$.
The identities
$\langle \varphi_g , \varphi_h \rangle = \langle \varphi_h , \varphi_g \rangle$, $\langle \varphi_g , \varphi_h f \rangle
= \langle \varphi_g f , \varphi_h \rangle
= \langle \varphi_g , \varphi_h \rangle f$,
$j(\langle \varphi_g , \varphi_h \rangle) = \langle j(\varphi_g) , j(\varphi_h) \rangle$
for all $g,h,j \in G$ and $f \in E$ are easy to check.
We note that (\ref{identinnerprodphi}) is positive definite. Indeed, assume
$\langle x,x\rangle = 0$ for $x=\sum_{i=1}^n \lambda_i \varphi_{g_i}$
with nonzero $\lambda_i \in \C$ and $g_i \in G$ mutually different.
Choose $g_j$ such that no other $g_i$ satisfies $g_j g_j^{-1} < g_i g_i^{-1}$.
Hence, $\langle \varphi_{g_j},\varphi_{g_j}\rangle = g_j g_j^{-1}$ but 
$\langle \varphi_{g_i},\varphi_{g_k}\rangle \neq g_j g_j^{-1}$ for all combinations where $i \neq k$.
By linear independence of the projections $E$ in $\mbox{Alg}^*(E)$ $\lambda_j$ must be zero;
contradiction.
The last proof also shows the following lemma.

\begin{lemma}    \label{lemmalinindepen}
The vectors $(\varphi_g)_{g \in G} \subseteq \widehat{\ell^2}(G)$ are linearly independent.
\end{lemma}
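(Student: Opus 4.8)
The plan is to reduce linear independence in the Hilbert module $\widehat{\ell^2}(G)$ to linear independence of the $\varphi_g$ as honest functions in $\C^G$, and then to establish the latter by a maximal-element argument in the natural partial order. First I would observe that the preceding positive-definiteness computation makes the inclusion $c \to \widehat{\ell^2}(G)$ injective: a finite combination $x = \sum_{i=1}^n \lambda_i \varphi_{g_i}$ vanishes in $\widehat{\ell^2}(G)$ if and only if $\langle x, x\rangle = 0$, and positive definiteness forces $x = 0$ already in $c \subseteq \C^G$. Since an injective linear map preserves linear independence, it therefore suffices to show that distinct $\varphi_{g_1}, \ldots, \varphi_{g_n}$ are linearly independent as functions $G \to \C$.

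For that, suppose $\sum_{i=1}^n \lambda_i \varphi_{g_i} = 0$ in $\C^G$ with the $g_i$ mutually distinct. Since $\{g_1,\dots,g_n\}$ is finite and the natural order on $G$ is a genuine (in particular antisymmetric) partial order, I may pick an index $j$ with $g_j$ maximal in this set. Evaluating the relation at $t = g_j$ gives $\sum_i \lambda_i \varphi_{g_i}(g_j) = \sum_i \lambda_i 1_{\{g_j \le g_i\}}$. Here $\varphi_{g_j}(g_j)=1$, while for $i \neq j$ the inequality $g_j \le g_i$ would force $g_i > g_j$ (distinctness together with antisymmetry), contradicting maximality; thus $\varphi_{g_i}(g_j) = 0$ for $i \neq j$. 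The evaluation therefore collapses to $\lambda_j = 0$. Deleting $g_j$ and inducting on $n$ yields $\lambda_i = 0$ for all $i$, which proves the claim.

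I expect the only real point to watch is the selection of the maximal element: once antisymmetry is invoked, ``maximal and distinct'' immediately gives $g_j \not\le g_i$, and this is exactly what isolates the single surviving coefficient. This complements the preceding positive-definiteness proof, which used a maximal \emph{source} projection $g_j g_j^{-1}$ instead; indeed that Gram-matrix computation already proves linear independence directly, since it shows $\langle x, x\rangle = 0$ forces each $\lambda_j = 0$. If one prefers that route, the delicate step is reading off the coefficient of the top projection from $\langle x, x\rangle = \sum_{i,k} \overline{\lambda_i}\lambda_k \langle \varphi_{g_i}, \varphi_{g_k}\rangle$, using the linear independence of the characteristic functions $1_e$ $(e \in E)$ in $\mbox{Alg}^*(E)$ together with the inequality $\langle \varphi_{g_i},\varphi_{g_k}\rangle \le g_i g_i^{-1} g_k g_k^{-1}$. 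The function-space argument above sidesteps this projection bookkeeping entirely, which is why I would present it as the primary proof.
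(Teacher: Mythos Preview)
Your proposal is correct. The paper's own proof is precisely your ``alternative route'': it simply remarks that the positive-definiteness computation (choosing $g_j$ with maximal source projection $g_j g_j^{-1}$ and reading off $|\lambda_j|^2$ as the coefficient of $g_j g_j^{-1}$ in $\langle x,x\rangle$, using that the projections $e\in E$ are linearly independent in $\mathrm{Alg}^*(E)$) already forces every $\lambda_j=0$, hence yields linear independence directly.

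Your primary argument takes a genuinely different path for the combinatorial core: instead of working with the Gram matrix and maximal \emph{source projections} in $E$, you work in $\C^G$ and pick a maximal element for the natural order on $G$ itself, then evaluate at $t=g_j$. This is cleaner in that it avoids the projection bookkeeping in $\mathrm{Alg}^*(E)$ and uses only antisymmetry of $\le$ on $G$. On the other hand, your reduction from $\widehat{\ell^2}(G)$ to $\C^G$ still invokes positive definiteness, and the paper's positive-definiteness argument already delivers the full conclusion $\lambda_j=0$ rather than merely $x=0$ in $c$; so once that argument is on the table, your $\C^G$ step is logically redundant (as you yourself note). In short: the paper's route is shorter given what precedes it, while yours isolates a self-contained and more elementary reason why the $\varphi_g$ are independent as functions.
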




Every $G$-algebra $(A,\alpha)$ is endowed with a $*$-homomorphism $\pi:C_0(X) \cong C^*(E) \rightarrow Z\calm(A)$ given by $\pi(e)(a)= \alpha_e(a)$ for all $e \in E$ and $a \in A$.
If $G$ has a unit than $A$ is a $C_0(X)$-algebra in the sense of Kasparov \cite[Definition 1.5]{kasparov1988}; but this extra property
is unimportant in this note.

\begin{definition}   \label{defBalancedTensor}
{\rm
We define (cf. Kasparov \cite[Definition 1.6]{kasparov1988}) the balanced tensor product $A \otimes^X B$ of two $G$-algebras $A$ and $B$ as the quotient
of $A \otimes_{max} B$ divided by the relations $e(a) \otimes b \equiv a \otimes e(b)$ for all $e \in E, a \in A$ and $b \in B$.
}
\end{definition}

\begin{definition}  \label{defL2hilbertmodule}
{\rm
Let $\cale$ be a $G$-Hilbert $B$-module. Then
$L^2 (G,\cale) := \widehat{\ell^2}(G) \otimes^X \cale$ is a $G$-Hilbert $B$-module, where 
$\otimes^X$ denotes the $C_0(X)$-balanced exterior tensor product as defined by Le Gall \cite[Definition 4.2]{legall1999}
(or in this case equivalently, the internal tensor product $\otimes_{C_0(X)}$).
}
\end{definition}

\begin{lemma}[Cf. Lemma 2.3 of \cite{mingophillips}]    \label{lemmamingophillips23}
If $\cale_1$ and $\cale_2$ are $G$-Hilbert $A$-modules which are isomorphic as Hilbert $A$-modules then $L^2(G,\cale_1)$
and $L^2(G,\cale_2)$ are isomorphic as $G$-Hilbert $A$-modules.
\end{lemma}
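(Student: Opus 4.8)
The plan is to imitate the Fell-absorption argument that underlies Mingo--Phillips's Lemma~2.3: the copy of the regular representation carried by $\widehat{\ell^2}(G)$ should absorb the failure of the given isomorphism to intertwine the $G$-actions. Fix an adjointable unitary $u:\cale_1 \rightarrow \cale_2$ of Hilbert $A$-modules, write $U^1,U^2$ for the two $G$-actions and $\tau_g(\varphi_h)=\varphi_{gh}$ for the action on $\widehat{\ell^2}(G)$. The first, and crucial, observation is that although $u$ need not intertwine the full actions, it automatically intertwines their idempotent parts: for $e \in E$ the operator $U^i_e$ is just right multiplication by the central multiplier $\alpha_e \in Z\calm(A)$ (this is exactly the relation $U_{g g^{-1}}(\xi)a = \xi \alpha_{g g^{-1}}(a)$ of Definition~\ref{defHilbert}), and any adjointable $A$-linear map is automatically $\calm(A)$-linear, so that $u\, U^1_e = U^2_e\, u$ for all $e \in E$. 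Thus $u$ is $C_0(X)$-linear even when it is not $G$-equivariant, and only the ``genuinely non-idempotent'' part of the actions needs to be untwisted.

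Guided by the classical function-picture formula $(V\xi)(s)=U_{s^{-1}}\xi(s)$, I would then write down a candidate $G$-equivariant unitary on the dense algebraic span of the elementary tensors $\varphi_h \otimes \xi$ by the twisting prescription
$$W(\varphi_h \otimes \xi) \;=\; \varphi_h \otimes U^2_h\, u\, U^1_{h^{-1}}(\xi),$$
with the symmetric formula involving $u^*$ serving as inverse. Here one exploits that in the balanced tensor product one has $\varphi_h \otimes \xi = \varphi_h \otimes U^i_{h h^{-1}}\xi$, since $\varphi_h\,(h h^{-1}) = \tau_{h h^{-1}}(\varphi_h) = \varphi_h$, so that the fibre over $\varphi_h$ is effectively cut down by the range projection $U^i_{h h^{-1}}$. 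I would then check in turn that $W$ is well defined (using the linear independence of the $\varphi_h$ from Lemma~\ref{lemmalinindepen} together with the explicit inner-product formula (\ref{identinnerprodphi}), which lies in $\mbox{Alg}^*(E)$ by $E$-continuity, Lemma~\ref{lemmaEcontinuity}), that it is $A$-linear, and that it is isometric for the balanced $A$-valued inner product, the last being a bookkeeping computation in which the partial-isometry relation $\langle U^2_h s, U^2_h t\rangle = \beta_h\langle s,t\rangle$ is played off against $\langle \varphi_h,\varphi_k\rangle$. An adjointable isometry with dense range extends to a unitary of the completed $G$-Hilbert $A$-modules.

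The main obstacle, and the only place where the inverse-semigroup setting genuinely differs from the group case, is verifying the equivariance $W \circ (\tau_g \otimes U^1_g) = (\tau_g \otimes U^2_g)\circ W$. Expanding both sides on $\varphi_h \otimes \xi$ produces on one side the factor $U^1_{(gh)^{-1}}U^1_g = U^1_{h^{-1}g^{-1}g}$, where the group computation would give simply $U^1_{h^{-1}}$; the discrepancy is precisely a central idempotent $U^1_{g^{-1}g}$, which must be reconciled with the competing range projection $U_{g g^{-1}}$ coming from the base point $\varphi_{gh}$. Following the technique advertised in the introduction, I would move this central projection through the expression: it commutes past $U^1_{h^{-1}}$ in $\call(\cale_1)$, transfers across $u$ to $U^2_{g^{-1}g}$ by the $C_0(X)$-linearity established above, and is then to be absorbed against $U^2_{gh}$ and the $h h^{-1}$-cut-down of the fibre. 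I expect that the naive formula above may still require correction by central projections so that source and range projections match on the two sides; pinning down these corrections, so that the $g g^{-1}$-versus-$g^{-1}g$ bookkeeping closes up, is the delicate point, after which $W$ is the desired $G$-equivariant unitary.
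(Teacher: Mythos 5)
Your proposal is correct and takes essentially the same route as the paper: the paper's proof uses exactly your map $V(\varphi_h \otimes \xi) = \varphi_h \otimes U^2_h\, u\, U^1_{h^{-1}}(\xi)$, the same key observation that $u$ automatically commutes with the idempotent parts $U_e$, $e \in E$ (since $u$ is $A$-linear and $U_{gg^{-1}}(\xi)a = \xi\,\alpha_{gg^{-1}}(a)$), and the same isometry check via the finite-sum form of $\langle \varphi_g,\varphi_h\rangle$ guaranteed by $E$-continuity. The correction you worry about at the end is not needed: the discrepancy $U^1_{g^{-1}g}$ transfers across $u$ and is absorbed into $U^2_{gh}$ via $gh \cdot h^{-1}g^{-1}gh = gh$, exactly as you sketch.
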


\begin{proof}
Let $u \in \call(\cale_1,\cale_2)$ be a unitary operator. Note that $g g^{-1} \in \call(\cale_i)$ commutes
with $u$ for all $g \in G$ since $u$ is $A$-linear and $g g^{-1}(\xi) a = \xi g g^{-1}(a)$ for all $\xi \in \cale_i, a \in A$.
Then it can be checked that $V: L^2(G,\cale_1) \rightarrow  L^2(G,\cale_2)$
given by $V(\varphi_g \otimes \xi) := \varphi_g \otimes g u g^{-1}(\xi)$
defines an isomorphism of $G$-Hilbert $A$-modules. 
We show that $V$ is $G$-equivariant. 
For $h \in G$ we have
\begin{eqnarray*}
h\big(V(\varphi_g \otimes \xi )\big ) &=& h \varphi_{g} \otimes h g u g^{-1} h^{-1} h (\xi) \\
&=& \varphi_{h g} \otimes hg(u) (h(\xi)) \\
&=& V\big (h(\varphi_g \otimes \xi) \big ),
\end{eqnarray*}
because $h^{-1} h \in \call(\cale_i)$ commutes with $g u g^{-1} \in \call(\cale_1,\cale_2)$.

For the inner product
note that $\langle \varphi_g , \varphi_h \rangle = \sum_{f \in F} f$ for a finite set $F \subseteq E$
with $fg = f h$ and $f \le g g^{-1} h h^{-1}$ by Lemma \ref{lemmaEcontinuity}, so that
\begin{eqnarray*}
\langle V(\varphi_g \otimes \xi),V(\varphi_h \otimes \eta)\rangle
&=& \sum_{f \in F} f \otimes \langle fgug^{-1} f(\xi),f h u h^{-1} f(\eta) \rangle\\
&=& \langle \varphi_g \otimes \xi,\varphi_h \otimes \eta\rangle.
\end{eqnarray*}
\end{proof}

\begin{corollary}[Cf. Theorem 2.4 of \cite{mingophillips}]
Let $\cale$ be a $G$-Hilbert $A$-module which is countably generated and full as a Hilbert $A$-module.
Then $L^2(G,\cale)^\infty$ is isomorphic to $L^2(G,A)^\infty$ by a $G$-equivariant isomorphism
of Hilbert $A$-modules.
\end{corollary}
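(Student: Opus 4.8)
The plan is to reduce the equivariant statement to a purely non-equivariant isomorphism of Hilbert $A$-modules and then feed it into Lemma \ref{lemmamingophillips23}, which is precisely designed to upgrade a plain isomorphism to a $G$-equivariant one after applying $L^2(G,-)$. In this way the entire burden of the $G$-action, and in particular everything coming from $E$-continuity and the compatible $\ell^2(G)$-space, is discharged by that lemma, and the remaining content becomes classical.

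First I would record the bookkeeping identities $L^2(G,\cale)^\infty \cong L^2(G,\cale^\infty)$ and $L^2(G,A)^\infty \cong L^2(G,A^\infty)$ as $G$-Hilbert $A$-modules. These hold because $L^2(G,-)=\widehat{\ell^2}(G)\otimes^X(-)$ by Definition \ref{defL2hilbertmodule}, and the balanced exterior tensor product commutes with countable orthogonal direct sums, each side being the completion of the same algebraic direct sum carrying the same $A$-valued inner product and the same diagonal $G$-action. In particular the corollary is equivalent to the assertion that $L^2(G,\cale^\infty)\cong L^2(G,A^\infty)$ as $G$-Hilbert $A$-modules.

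Next I would establish, \emph{non-equivariantly}, that $\cale^\infty\cong A^\infty$ as Hilbert $A$-modules. By Kasparov's stabilization theorem \cite{kasparov1980} one has $\cale\oplus A^\infty\cong A^\infty$, since $\cale$ is countably generated; taking countably many orthogonal copies and using $(A^\infty)^\infty\cong A^\infty$ gives $\cale^\infty\oplus A^\infty\cong A^\infty$. For the opposite absorption I would use fullness: since $\overline{\langle\cale,\cale\rangle}=A$ and $\cale$ is countably generated, $A$ is isomorphic to an orthogonally complemented submodule of $\cale^\infty$, whence $A^\infty$ is complemented in $(\cale^\infty)^\infty\cong\cale^\infty$, and a second application of the stabilization theorem yields $A^\infty\oplus\cale^\infty\cong\cale^\infty$. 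Combining the two absorptions gives $\cale^\infty\cong\cale^\infty\oplus A^\infty\cong A^\infty$.

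Finally, observing that $\cale^\infty$ and $A^\infty$ are $G$-Hilbert $A$-modules for the diagonal $G$-action (the action on $A^\infty$ arising from $A$ regarded as a $G$-Hilbert module over itself), I would apply Lemma \ref{lemmamingophillips23} to the plain isomorphism $\cale^\infty\cong A^\infty$ to obtain $L^2(G,\cale^\infty)\cong L^2(G,A^\infty)$ $G$-equivariantly, and then transport this through the identities of the first step to conclude. I expect the main obstacle to be the fullness half of the non-equivariant step, namely realizing $A$ as a complemented submodule of $\cale^\infty$ for a possibly non-unital $A$; this is a standard Hilbert-module computation (using an approximate unit assembled from inner products $\langle\xi,\eta\rangle$, which are dense by fullness) and carries no equivariance content, so it may simply be cited. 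Everything genuinely specific to inverse semigroups has already been absorbed into Lemma \ref{lemmamingophillips23}.
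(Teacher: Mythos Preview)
Your proposal is correct and is exactly the approach the paper intends: the paper's proof reads ``Same proof as in Mingo and Phillips \cite{mingophillips}, Theorem 2.4, but by applying Lemma \ref{lemmamingophillips23} instead of \cite[Lemma 2.3]{mingophillips},'' and what you have written is precisely an unpacking of that Mingo--Phillips argument (non-equivariant $\cale^\infty\cong A^\infty$ via stabilization and fullness, then Lemma \ref{lemmamingophillips23}, then the bookkeeping $L^2(G,-)^\infty\cong L^2(G,(-)^\infty)$). Your caveat about the fullness step is well placed but, as you say, that part is classical and carries no equivariance content.
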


\begin{proof}
Same proof as in Mingo and Phillips \cite{mingophillips}, Theorem 2.4, but by applying
Lemma \ref{lemmamingophillips23} instead of \cite[Lemma 2.3]{mingophillips}.
\end{proof}

\begin{corollary}[Cf. Corollary 2.6 of \cite{mingophillips}]    \label{corollarymingophillips26}
Let $(A,\alpha)$ be a $G$-algebra and suppose that $A$ has a strictly positive element.
If $p \in \calm(A)$ is a full $G$-invariant projection then $p \otimes 1_{\calm(\calk_G)} \sim 
1_{\calm(A)} \otimes
1_{\calm(\calk_G)}$
(Murray--von Neumann equivalence) in $\calm(A \otimes^X \calk_{G})$
by a $G$-invariant partial isometry.
\end{corollary}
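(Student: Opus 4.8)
The plan is to deduce this Murray--von Neumann equivalence from the stabilization theorem for the compatible $\ell^2$-space, i.e.\ the Corollary preceding this one (the analog of Theorem 2.4 of \cite{mingophillips}), applied to the sub-Hilbert-module of $A$ cut out by $p$. First I would set $\cale := pA \subseteq A$ and view it as a right Hilbert $A$-module with the operations inherited from $A$. Since $p$ is $G$-invariant, $\overline\alpha_g(p) = p\,\alpha_{g g^{-1}}$, whence $\alpha_g(pa) = \overline\alpha_g(p)\,\alpha_g(a) = p\,\alpha_{g g^{-1}}\alpha_g(a) = p\,\alpha_g(a) \in pA$ by $\alpha_{g g^{-1}}\alpha_g = \alpha_g$; thus $pA$ is a $G$-invariant submodule of $A$ and carries the restricted (compatible) $G$-Hilbert $A$-module structure. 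I would then verify the two hypotheses of the stabilization theorem: $\cale = pA$ is \emph{full}, because $\langle pA, pA\rangle = \overline{\mbox{span}}\,(ApA)$, which equals $A$ precisely when $p$ is full; and $\cale = pA$ is \emph{countably generated}, since $A$ has a strictly positive element and is therefore countably generated over itself, so that the complemented summand $pA$ (recall $A = pA \oplus (1-p)A$ as Hilbert $A$-modules) is countably generated as well.

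Next I would record the algebra identification, exactly as in the proof of Lemma \ref{lemma62}: by Le Gall \cite[Proposition 4.2.(a)]{legall1999} one has
$$A \otimes^X \calk_G \;\cong\; \calk_G \otimes^X A \;\cong\; \calk\big(L^2(G) \otimes^X A^\infty\big) \;=\; \calk\big(L^2(G,A)^\infty\big),$$
and hence $\calm(A \otimes^X \calk_G) \cong \call(L^2(G,A)^\infty)$ as $G$-algebras. Under this identification the projection $1_{\calm(A)} \otimes 1_{\calm(\calk_G)}$ becomes the identity operator on $L^2(G,A)^\infty$, while $p \otimes 1_{\calm(\calk_G)}$ becomes left multiplication by $p$ on the $A$-factor, i.e.\ the self-adjoint projection $P$ onto the $G$-invariant submodule $L^2(G,pA)^\infty = L^2(G,\cale)^\infty$.

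With these preparations the equivalence is immediate. The stabilization theorem supplies a $G$-equivariant Hilbert $A$-module isomorphism $W: L^2(G,\cale)^\infty \to L^2(G,A)^\infty$; being a unitary Hilbert-module map it is adjointable with $W^*$ again $G$-equivariant. Writing $\iota: L^2(G,\cale)^\infty \hookrightarrow L^2(G,A)^\infty$ for the ($G$-equivariant) inclusion, I would set $V := \iota \circ W^* \in \call(L^2(G,A)^\infty) = \calm(A \otimes^X \calk_G)$. Then $V^* V = W\,\iota^*\iota\,W^* = W W^* = 1$ and $V V^* = \iota\,W^* W\,\iota^* = \iota\,\iota^* = P$, and $V$ is $G$-invariant in the sense of Definition \ref{defGinvariantoperator} because $V U_g = \iota W^* U_g = \iota U_g W^* = U_g \iota W^* = U_g V$ for all $g \in G$. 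Transported back through the identification above, $V$ is precisely a $G$-invariant partial isometry realizing $p \otimes 1_{\calm(\calk_G)} \sim 1_{\calm(A)} \otimes 1_{\calm(\calk_G)}$.

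The main point requiring care is the bookkeeping of the projections $g g^{-1}$: one must check that the chain of isomorphisms producing $\calm(A \otimes^X \calk_G) \cong \call(L^2(G,A)^\infty)$ is genuinely $G$-equivariant, so that $G$-equivariance of $W$ really translates into $G$-invariance of $V$ and that $p \otimes 1$ is carried to $P$ rather than to some $\alpha_{g g^{-1}}$-twisted variant. This is exactly where the compatibility of the inner products and the identity $\alpha_{g g^{-1}}\alpha_g = \alpha_g$ enter, in the same manner as the $G$-equivariance verification for $\gamma$ already carried out inside the proof of Lemma \ref{lemma62}; everything else is routine.
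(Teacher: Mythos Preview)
Your proof is correct and follows exactly the strategy the paper intends: it is a detailed reconstruction of Mingo and Phillips' original argument for their Corollary 2.6, with the balanced tensor product $\otimes^X$ in place of $\otimes$ and the compatible stabilization theorem (the preceding corollary) in place of their Theorem 2.4. The paper itself simply records that the original proof goes through verbatim, so your write-up is a faithful expansion of that remark.
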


\begin{proof}
The proof of the original goes verbatim through. The usage of the balanced tensor product $\otimes^X$
instead of $\otimes$ is obligatory.
\end{proof}

\bibliographystyle{plain}
\bibliography{references}

\end{document}